\newenvironment{breakablealgorithm}
  {
   \begin{center}
     \refstepcounter{algorithm}
     \hrule height.8pt depth0pt \kern2pt
     \renewcommand{\caption}[2][\relax]{
       {\raggedright\textbf{\ALG@name~\thealgorithm} ##2\par}%
       \ifx\relax##1\relax 
         \addcontentsline{loa}{algorithm}{\protect\numberline{\thealgorithm}##2}%
       \else 
         \addcontentsline{loa}{algorithm}{\protect\numberline{\thealgorithm}##1}%
       \fi
       \kern2pt\hrule\kern2pt
     }
  }{
     \kern2pt\hrule\relax
   \end{center}
  }
\numberwithin{equation}{section}
\newtheorem{theorem}{\bf Theorem} 
\begin{document}

\title{Composite Optimization Algorithms for Sigmoid Networks} 

\author{\name Huixiong Chen \email hxchen@m.scnu.edu.cn\\
       \addr
       School of Mathematical Sciences\\
       South China Normal University\\
       Guangzhou 510631, China\\
      \name Qi Ye \email yeqi@m.scnu.edu.cn \\
      \addr School of Mathematical Sciences\\
       South China Normal University\\
       Guangzhou 510631, China\\
       }


\maketitle

\begin{abstract}
In this paper, we use composite optimization algorithms to solve sigmoid networks. We equivalently transfer the sigmoid networks to a convex composite optimization and propose the composite optimization algorithms based on the linearized proximal algorithms and the alternating direction method of multipliers. Under the assumptions of the weak sharp minima and the regularity condition, the algorithm is guaranteed to converge to a globally optimal solution of the objective function even in the case of non-convex and non-smooth problems. Furthermore, the convergence results can be directly related to the amount of training data and provide a general guide for setting the size of sigmoid networks. Numerical experiments on Franke's function fitting and handwritten digit recognition show that the proposed algorithms perform satisfactorily and robustly.

\end{abstract}

\begin{keywords} 
   sigmoid network, composite optimization, non-convex non-smooth algorithm,  global convergence, adaptive network size
\end{keywords}

\section{Introduction}

The neural network is an important and popular branch of machine learning. People have already developed many useful and well-studied neural network models, such as artificial neural networks, convolutional neural networks, recurrent neural networks, and deep neural networks. Neural networks have been widely used in pattern recognition, image processing, computer vision, neuroinformatics, bioinformatics, and other various fields with great success (\cite{LeCun2015, Abiodun2018}). 

When the neural networks are used in practical tasks, they are commonly trained by the error BackPropagation (BP) algorithm which is the most distinguished and successful neural network learning algorithm up to now. The BP algorithm is based on the gradient descent strategy that updates the parameters to the negative gradient direction of the target. To accelerate the learning process, stochastic gradient descent (SGD) with momentum and adaptive methods including adaptive gradient (AdaGrad), root mean square prop (RMSProp), adaptive moment estimation (Adam), and so on have emerged one after another and made a huge impact. As we all know, most of these first-order methods can converge to the critical point only if the objective function is convex or smooth. But for non-convex and non-smooth functions, it remains ambiguous how to find the convergence to even first- or second-order critical points (\cite{Burke2005}). Typical cases are sigmoid networks with absolute or hinge loss functions. The BP algorithm can solve these non-convex and non-smooth problems as well, but they are not consistent with the convergence properties of the algorithm. Moreover, it is still non-trivial to find globally optimal solutions for traditional neural network algorithms. We take the state-of-the-art Adam as an example. Its theory is poorly understood in the literature, and it suffers from several deficiencies. For instance, Adam may miss globally optimal solutions (\cite{Wilson2017}), and it can be shown that it does not converge on some simple test problems (\cite{Reddi2018}).

In this paper, we use composite optimization algorithms to solve sigmoid networks; see Algorithms \ref{lpa} and \ref{glpa} for details. The algorithm is guaranteed to (even globally) converge to a globally optimal solution of the objective function even in the case of non-convex and non-smooth problems. That is the main contribution of this paper. The start of our work stems from the finding that sigmoid networks (\ref{addition}) can be equivalently transformed into a convex composite optimization (\ref{composite}), where the inner function is smooth and the outer function is convex. This provides a new perspective on sigmoid networks. In fact, composite optimization problems arise in many applications in engineering, such as compressed sensing, image processing, machine learning, and artificial intelligence (\cite{Boyd2011, Hong2017}). The composite optimization is an area at the cutting edge of mathematical optimization, and how to efficiently solve composite optimization problems has been a popular subject.  For the sigmoid networks with the structure (\ref{composite}), the traditional first-order methods do not take advantage of the convex property of the outer function, so sometimes they have certain limitations in practical applications. However, composite optimization methods can fully exploit the information in the structure for algorithm design. There are many iterative algorithms with theoretical foundations for the optimization (\ref{composite}), such as the famous Gauss-Newton method (GNM, \cite{Burke1995}), the proximal descent algorithm (ProxDescent, \cite{Lewis2016}), and the linearized proximal algorithms (LPA, \cite{Hu2016}). The basic idea of these algorithms is to transfer a complex optimization problem to a sequence of simple optimization problems whose optimal solutions are easy to compute or have explicit formulas. The LPA is one of the most advanced algorithms in convex composite optimization. It can transform a non-convex and possibly non-smooth problem into a series of unconstrained strongly convex optimization subproblems, which has an attractive computational advantage. The LPA has also been applied to sensor network localization, gene regulatory network inference, and other engineering problems with great success (\cite{Hu2016, Hu2020, Wang2017}). Therefore, we use the LPA to solve sigmoid networks in this paper.

Under the assumptions of the weak sharp minima and the regularity condition, we establish the convergence behavior of the algorithms for sigmoid networks; see Theorems \ref{local} and \ref{global} for details. Furthermore, we prove that the weak sharp minima is often satisfied for sigmoid networks, and the full row rank of the Jacobian matrix of the inner function, namely $\text{\rm rank}(F^{\prime}(\bar{\boldsymbol{\theta}}))=m$, where $m$ is the amount of training data, is a sufficient condition of the regularity condition. Hence the convergence results can be directly related to the amount of training data; see Corollaries \ref{sufficient} and \ref{suffg} for details. This conclusion is of great theoretical and applied significance, especially since it can provide a general guide for setting the size of sigmoid networks. By the full row rank of the Jacobian matrix, we obtain a lower bound on the network size in Corollary \ref{net_size}. We call this lower bound the ``adaptive network size”. In this paper, our numerical experiments verify that the adaptive network size is sufficient to construct an ideal sigmoid network that solves the problem effectively. Hence Corollary \ref{net_size} does provide a good guide for setting the size of sigmoid networks. The essence is to guarantee that the number of parameters in neural networks is not smaller than the amount of training data, and that a sufficient number of parameters ensure the feasibility of the networks. It can also serve as a general guide for setting the size of neural networks. That is another contribution of this paper. 

Our work is also motivated by the lack of convex composite optimization algorithms and related software packages for neural networks. To the best of our knowledge, the introduction of convex composite optimization into the area of neural networks has not been addressed in the literature before. This paper is the first piece of work combining neural networks and convex composite optimization.


We organize the paper as follows. In section \ref{part2}, we introduce the three-layer sigmoid networks and transfer the problem to a convex composite optimization. In section \ref{part3}, we use the LPA-type algorithms to solve sigmoid networks and employ the alternating direction method of multipliers (ADMM) to solve the non-smooth convex subproblems. In section \ref{part4}, we prove some convergence properties of the proposed algorithms.
In section \ref{part5}, the numerical experiments are demonstrated including
Franke’s function fitting and handwritten digit recognition. Finally, we conclude with an outlook in section \ref{part6}.

\section{Sigmoid Networks}
\label{part2}
To begin with, we introduce the two-layer real-output sigmoid network, which is known as ‘universal approximators’ (\cite{Anthony1999}). Using the standard sigmoid function $\sigma: \mathbb{R}\rightarrow (0,1)$ of the form
\begin{equation*}
    \sigma(a) = \frac{1}{1+e^{-a}},
\end{equation*}
the sigmoid network computes a function $f:\mathbb{R}^d\rightarrow\mathbb{R}$ of the form
\begin{equation*}
    f(\boldsymbol{x})=\sum_{i=1}^{q} w_i\sigma(\boldsymbol{v}_i\cdot \boldsymbol{x}+u_i) + w_0,
\end{equation*}
where $w_i\in\mathbb{R}~(i=0,1,\dots,q)$ are the output weights,  $\boldsymbol{v}_i\in\mathbb{R}^d$ and $u_{i}\in\mathbb{R}~(i=1,2,\dots,q)$ are the input weights. We define these adjustable parameters by
\begin{equation*}
    \boldsymbol{\theta} = (\boldsymbol{w},\boldsymbol{v},\boldsymbol{u},w_0)^{T}\in\mathbb{R}^n,
\end{equation*} 
where $\boldsymbol{w}=(w_1,w_2,\dots,w_q), \boldsymbol{v} =(\boldsymbol{v}_1,\boldsymbol{v}_2,\dots,\boldsymbol{v}_q), \boldsymbol{u} =(u_1,u_2,\dots,u_q)$.
In the following paragraphs, we replace $f(\boldsymbol{x})$ with $f(\boldsymbol{x};\boldsymbol{\theta})$.
Given a training dataset 
\begin{equation*}
    D = \{ (\boldsymbol{x}_1,y_1),(\boldsymbol{x}_2,y_2),\dots,(\boldsymbol{x}_m,y_m)~|~\boldsymbol{x}_i\in\mathbb{R}^d, y_i\in\mathbb{R}\},
\end{equation*}
the goal of using this network for a supervised learning problem is to find parameters that minimize some measure of the error of the network output over the training dataset, that is,
\begin{equation}\label{addition}
    \min_{\boldsymbol{\theta}\in\mathbb{R}^n} ~E(\boldsymbol{\theta}):=\frac{1}{m} \sum_{i=1}^{m} L (f(\boldsymbol{x}_i;\boldsymbol{\theta}),y_i),
\end{equation}
where $L:\mathbb{R}\times\mathbb{R}\rightarrow[0,\infty)$ is a loss function. To simplify the discussions, we focus on three convex loss functions including the quadratic loss function, $(f(\boldsymbol{x};\boldsymbol{\theta}),y)\mapsto(f(\boldsymbol{x};\boldsymbol{\theta})-y)^2$, the absolute loss function, $(f(\boldsymbol{x};\boldsymbol{\theta}),y)\mapsto|f(\boldsymbol{x};\boldsymbol{\theta})-y|$, and the hinge loss function, $(f(\boldsymbol{x};\boldsymbol{\theta}),y)\mapsto(1-yf(\boldsymbol{x};\boldsymbol{\theta}))_{+}$.

The model of the sigmoid network is usually non-convex and non-smooth. Interestingly, we discover that this problem can be seen as a convex composite optimization problem of the form
\begin{equation}\label{composite}
    \min_{\boldsymbol{\theta}\in\mathbb{R}^n} ~E(\boldsymbol{\theta})=\mathbb{L}(F(\boldsymbol{\theta})),
\end{equation}
where the inner function $F:\mathbb{R}^n\rightarrow\mathbb{R}^m$ is smooth, and the outer function $\mathbb{L}:\mathbb{R}^{m}\rightarrow\mathbb{R}$ is convex. Specifically, for the absolute or quadratic loss functions, we can set
\begin{equation*} 
    F(\boldsymbol{\theta}) = \begin{pmatrix}
            f(\boldsymbol{x}_1;\boldsymbol{\theta})-y_1\\ f(\boldsymbol{x}_2;\boldsymbol{\theta})-y_2\\
            \vdots\\
            f(\boldsymbol{x}_m;\boldsymbol{\theta})-y_m
            \end{pmatrix}, ~ \mathbb{L}(\boldsymbol{z}) = \frac{1}{m}\Vert\boldsymbol{z}\Vert_p^p, ~\text{where}~ p=1 ~\text{or}~ 2.
\end{equation*}
In general, we replace $\Vert\cdot\Vert_2$ with $\Vert\cdot\Vert$. For the hinge loss function, we can set
\begin{equation*} 
    F(\boldsymbol{\theta}) = \begin{pmatrix}
            y_1 f(\boldsymbol{x}_1;\boldsymbol{\theta})\\
            y_2 f(\boldsymbol{x}_2;\boldsymbol{\theta})\\
            \vdots\\
            y_m f(\boldsymbol{x}_m;\boldsymbol{\theta})
            \end{pmatrix}, ~ \mathbb{L}(\boldsymbol{z}) = \frac{1}{m}\sum_{i=1}^{m}(1-z_i)_{+} = \frac{1}{m} \Vert(\boldsymbol{1}-\boldsymbol{z})_{+}\Vert_1, 
\end{equation*}
where $\boldsymbol{z}_{+}$ denotes the componentwise non-negative part of $\boldsymbol{z}$. As we can see, all the outer functions are separable and have the form
\begin{equation*}
    \mathbb{L}(\boldsymbol{z}) = \frac{1}{m}\sum_{i=1}^{m} \mathtt{L} (z_i),
\end{equation*}
where $\mathtt{L}:\mathbb{R}\rightarrow[0,\infty)$ is a convex function. It is the special property of $\mathbb{L}$ in sigmoid networks.


\section{Composite Optimization Algorithms for Sigmoid Networks}
\label{part3}

In this section, we show how to solve the sigmoid networks based on the composite optimization algorithms including the linearized proximal algorithms (LPA) and the alternating direction method of multipliers (ADMM).

{\bf 3.1 LPA for Sigmoid Networks}. The LPA is one of the most advanced algorithms in convex composite optimization. It is proposed under the inspiration of the GNM and the proximal point algorithm (PPA), and maintains the same convergence rate as that but also overcomes some of their disadvantages. Each subproblem of the LPA is constructed from a linearized approximation to the composite function and a regularization term at the current iterate. Since the subproblem is an unconstrained strongly convex optimization problem whose optimal solution is global and unique, it is easier to solve than that of the GNM. Consequently, the LPA has an attractive computational advantage, although it is generally not a descent algorithm.
 Moreover, there are some connections of the LPA with other algorithms mentioned in this paper. The ProxDescent for solving (\ref{composite}) is a special case of the LPA. As the descent directions are used, the ProxDescent is a descent algorithm. The case when the inner function is simply identity mapping has a long history. The iteration  $\boldsymbol{\theta}_{k+1}=\boldsymbol{\theta}_{k}+\Delta\boldsymbol{\theta}_k$, where $\Delta\boldsymbol{\theta}_k$ minimizes the function $\Delta\boldsymbol{\theta} \mapsto h(\boldsymbol{\theta}_k+\Delta\boldsymbol{\theta})+\frac{1}{2t}\Vert \Delta\boldsymbol{\theta}\Vert^2$, is the well-known PPA.

Applying the LPA directly to (\ref{composite}), we get the following algorithm for sigmoid networks.

\begin{breakablealgorithm}
    \caption{\hspace{-0.4em}. LPA for sigmoid networks.}
	\label{lpa}
	\renewcommand{\algorithmicrequire}{\textbf{Input:}}
	\renewcommand{\algorithmicensure}{\textbf{Output:}}
	\begin{algorithmic}[1]
	\REQUIRE Model $f$, training dataset $D=\{(\boldsymbol{x}_i,y_i)\}_{i=1}^{m}$, outer function $\mathbb{L}$, inner function $F$.
	\STATE \textbf{Initialization}: $t>0$, $\boldsymbol{\theta}_0\in\mathbb{R}^n$, $k\gets 0$, accept $\gets$ false;
	\WHILE{not accept}
	    \STATE calculate the search direction
		\begin{equation}\label{subp}
         \Delta\boldsymbol{\theta}_k := \mathop{\arg\min}\limits_{\Delta\boldsymbol{\theta}\in\mathbb{R}^n} \left\{ \mathbb{L} (F(\boldsymbol{\theta}_k)+F^{\prime}(\boldsymbol{\theta}_k)\Delta\boldsymbol{\theta})+\frac{1}{2t}\Vert \Delta\boldsymbol{\theta}\Vert^2 \right\},
        \end{equation}
        where $F^{\prime}(\boldsymbol{\theta}) = \left(\nabla^{T}_{\boldsymbol{\theta}} f(\boldsymbol{x}_i;\boldsymbol{\theta})\right)_{i=1}^m \in\mathbb{R}^{m\times n}$ is the Jacobian matrix of $F(\boldsymbol{\theta})$; 
        \IF{$\Delta\boldsymbol{\theta}_k=\boldsymbol{0}$}
            \STATE accept $\gets$ true;
        \ENDIF
         \STATE $\boldsymbol{\theta}_{k+1} \gets \boldsymbol{\theta}_k + \Delta\boldsymbol{\theta}_k$;
        \STATE $k\gets k+1$;
	\ENDWHILE
    \STATE $\boldsymbol{\theta}^{*}\gets \boldsymbol{\theta}_k$.
    \ENSURE $f(\boldsymbol{x};\boldsymbol{\theta}^{*})$.
	\end{algorithmic}
\end{breakablealgorithm}

The focus of Algorithm \ref{lpa} is how to solve the subproblem (\ref{subp}) accurately and efficiently. Now, we discuss some numerical algorithms for the special loss functions.

For the {\bf quadratic loss function}, Algorithm \ref{lpa} is reduced to the well-known Levenberg- Marquardt method for solving the following nonlinear least squares problem of the form
  \begin{equation*} 
      \min_{\boldsymbol{\theta}\in\mathbb{R}^n} ~E(\boldsymbol{\theta})=\frac{1}{m}\Vert F(\boldsymbol{\theta})\Vert^2.
  \end{equation*}
  The {\bf smooth convex subproblem} can be written as
  \begin{equation*} 
      \min_{\Delta\boldsymbol{\theta}\in\mathbb{R}^n} ~\frac{1}{m}\Vert F(\boldsymbol{\theta}_k)+F^{\prime}(\boldsymbol{\theta}_k)\Delta\boldsymbol{\theta}\Vert^2 + \frac{1}{2t}\Vert\Delta\boldsymbol{\theta}\Vert^2,
  \end{equation*}
   and its necessary and sufficient optimality conditions imply that
  \begin{equation*}
      \frac{2}{m} F^{\prime}(\boldsymbol{\theta}_k)^{T}\left(F(\boldsymbol{\theta}_k)+F^{\prime}(\boldsymbol{\theta}_k)\Delta\boldsymbol{\theta}_k\right) + \frac{1}{t}\Delta\boldsymbol{\theta}_k=\boldsymbol{0}.
  \end{equation*}
  Hence the closed formula of the search direction is given by
  \begin{equation*} 
      \Delta\boldsymbol{\theta}_k = - \left( \frac{2}{m}F^{\prime}(\boldsymbol{\theta}_k)^{T} F^{\prime}(\boldsymbol{\theta}_k) + \frac{1}{t}\boldsymbol{I} \right)^{-1}\left( \frac{2}{m}F^{\prime}(\boldsymbol{\theta}_k)^{T} F(\boldsymbol{\theta}_k) \right) \triangleq -B_k^{-1} \nabla E(\boldsymbol{\theta}_k),
  \end{equation*}
  where $B_k$ is always a positive-definite and invertible matrix, and $\nabla E(\boldsymbol{\theta})$ is the gradient of the objective function in the original problem. Thus, the iteration $\boldsymbol{\theta}_{k+1} = \boldsymbol{\theta}_k-B_k^{-1} \nabla E(\boldsymbol{\theta}_k)$ can be regarded as a variant of gradient descent algorithm, where $B_k^{-1}$ is an adaptive learning rate. Moreover, the  stopping criterion $\Delta\boldsymbol{\theta}_k=\boldsymbol{0}$ shows that $\nabla E(\boldsymbol{\theta}_k)=\boldsymbol{0}$, which is the first-order necessary condition of the original problem. In section \ref{part4}, we will give a first-order sufficient condition of the original problem in Theorem \ref{suffr}. 

  {\bf 3.2 ADMM for Non-Smooth Convex Subproblems}. For the non-smooth convex loss functions, the subproblem of Algorithm \ref{lpa} is more complex, but luckily it is convex. There are many widely used convex optimization methods and heuristic algorithms to solve it, such as gradient or subgradient methods, approximation or composite optimization methods (\cite{Bertsekas2015}), and simulated annealing algorithms. Moreover, there are many related software packages to implement these algorithms, such as CVXPY in Python, qpOASES in C$++$, and CVX toolbox in Matlab. So it is not difficult to calculate the search direction from the subproblem. We use a mapping A to represent a specific algorithm to solve the subproblem, then the search direction can be presented in
\begin{equation*}
    \Delta\boldsymbol{\theta}_k = {\rm A}(\mathbb{L}, F, F^{\prime}, t, \boldsymbol{\theta}_k).
\end{equation*}
Here we use the ADMM to solve the subproblem. The ADMM is a simple scheme that often works well and has a good reliability with a wide range of applications, especially for convex problems. It is also easy to understand and implement for many composite optimization problems with complex structures (\cite{Boyd2011}).

The subproblem (\ref{subp}) can be seen as an equivalent problem of the form
\begin{align*}
      \min_{\boldsymbol{\mu}\in\mathbb{R}^{m}, \Delta\boldsymbol{\theta}\in\mathbb{R}^n} ~&~ \mathbb{L}(\boldsymbol{\mu}) + \frac{1}{2t}\Vert\Delta\boldsymbol{\theta}\Vert^2,\\
      \text{s.t.}\hspace{1.3em} ~&~ \boldsymbol{\mu} - F(\boldsymbol{\theta}_k)-F^{\prime}(\boldsymbol{\theta}_k)\Delta\boldsymbol{\theta} = \boldsymbol{0}.\notag
  \end{align*}
  The augmented Lagrangian function of the above problem is
  \begin{align*}
     \mathcal{L}_{\rho}(\boldsymbol{\mu},\Delta\boldsymbol{\theta},\boldsymbol{\lambda}) = \mathbb{L}(\boldsymbol{\mu}) & + \frac{1}{2t}\Vert\Delta\boldsymbol{\theta}\Vert^2 + \boldsymbol{\lambda}^T \left( \boldsymbol{\mu} - F(\boldsymbol{\theta}_k)-F^{\prime}(\boldsymbol{\theta}_k)\Delta\boldsymbol{\theta} \right) + \frac{\rho}{2} \Vert \boldsymbol{\mu} - F(\boldsymbol{\theta}_k)-F^{\prime}(\boldsymbol{\theta}_k)\Delta\boldsymbol{\theta} \Vert^2,
  \end{align*}
  where $\rho>0$ is the penalty parameter. The ADMM consists of the iterations
  \begin{align}
      \boldsymbol{\mu}^{i+1} & := \mathop{\arg\min}\limits_{\boldsymbol{\mu}\in\mathbb{R}^{m}} \mathcal{L}_{\rho}(\boldsymbol{\mu},\Delta\boldsymbol{\theta}^i,\boldsymbol{\lambda}^i),\notag 
      \\
      \Delta\boldsymbol{\theta}^{i+1} & := \mathop{\arg\min}\limits_{\Delta\boldsymbol{\theta}\in\mathbb{R}^n} \mathcal{L}_{\rho}(\boldsymbol{\mu}^{i+1},\Delta\boldsymbol{\theta},\boldsymbol{\lambda}^i),\notag \\
      \boldsymbol{\lambda}^{i+1} & := \boldsymbol{\lambda}^{i} + \rho\left(\boldsymbol{\mu}^{i+1} - F(\boldsymbol{\theta}_k) - F^{\prime}(\boldsymbol{\theta}_k)\Delta\boldsymbol{\theta}^{i+1} \right). \label{eta1} 
  \end{align}

   The calculation for $\boldsymbol{\mu}^{i+1}$ is as follows.
  \begin{align} \label{z1}
      \boldsymbol{\mu}^{i+1} & =  \mathop{\arg\min}\limits_{\boldsymbol{\mu}\in\mathbb{R}^{m}}~ \left\{\mathbb{L}(\boldsymbol{\mu}) + \frac{\rho}{2} \Vert \boldsymbol{\mu} - F(\boldsymbol{\theta}_k) - F^{\prime}(\boldsymbol{\theta}_k)\Delta\boldsymbol{\theta}^i + \frac{1}{\rho}\boldsymbol{\lambda}^i \Vert^2\right\},\notag \\
      & = \mathop{\arg\min}\limits_{\boldsymbol{\mu}\in\mathbb{R}^{m}}~ \left\{\mathbb{L}(\boldsymbol{\mu}) + \frac{\rho}{2} \Vert \boldsymbol{\mu} - \boldsymbol{a}^i \Vert^2\right\},\notag \\
      & = \mathop{\arg\min}\limits_{\boldsymbol{\mu}\in\mathbb{R}^{m}}~ \left\{\sum_{j=1}^{m} \left( \frac{1}{m} \mathtt{L}(\mu_j) + \frac{\rho}{2} (\mu_j - a^i_j)^2\right)\right\},\notag \\
      & = \left( \mathop{\arg\min}\limits_{\mu_j\in\mathbb{R}}~\left\{ \frac{1}{m}\mathtt{L}(\mu_j) + \frac{\rho}{2} (\mu_j - a^i_j)^2\right\} \right)_{j=1}^m,\notag \\
      & = \left( \Phi_{1/{m\rho}}(a^i_j) \right)_{j=1}^m,
  \end{align}
  where $\boldsymbol{a}^i = F(\boldsymbol{\theta}_k) + F^{\prime}(\boldsymbol{\theta}_k)\Delta\boldsymbol{\theta}^i - \frac{1}{\rho}\boldsymbol{\lambda}^i$, and $\Phi_{\kappa}$ is the proximity operator of $\mathtt{L}$ with the penalty $\frac{1}{\kappa}$ (\cite{Boyd2011}). Specifically, for the {\bf absolute loss function}, the proximity operator $\Phi$, also called the soft thresholding operator, is defined as
  \begin{equation*}
      \Phi_{\kappa}(a) = \left\{ \begin{array}{cl}
       a-\kappa, & a>\kappa, \\[7pt]
       0,  & |a|\leq\kappa, \\[7pt]
       a+\kappa, & a<-\kappa.
    \end{array} \right.
  \end{equation*}
  For the {\bf hinge loss function}, the proximity operator $\Phi$ is defined as
\begin{equation*}
      \Phi_{\kappa}(a) = \left\{ \begin{array}{cc}
       a, & a>1, \\[7pt]
       1,  & 1-\kappa\leq a\leq 1, \\[7pt]
       a+\kappa, & a<1-\kappa.
    \end{array} \right.
  \end{equation*}

  The calculation for $\Delta\boldsymbol{\theta}^{i+1}$ is as follows. Since
  \begin{equation*}
      \Delta\boldsymbol{\theta}^{i+1} = \mathop{\arg\min}\limits_{\Delta\boldsymbol{\theta}\in\mathbb{R}^n}
      ~\left\{\frac{1}{2t}\Vert\Delta\boldsymbol{\theta}\Vert^2 + \frac{\rho}{2} \Vert \boldsymbol{\mu}^{i+1} - F(\boldsymbol{\theta}_k) - F^{\prime}(\boldsymbol{\theta}_k)\Delta\boldsymbol{\theta} + \frac{1}{\rho}\boldsymbol{\lambda}^i \Vert^2\right\},
  \end{equation*}
  by its necessary and sufficient optimality conditions, we obtain that
  \begin{equation} \label{d1}
      \Delta\boldsymbol{\theta}^{i+1} = \left( \rho F^{\prime}(\boldsymbol{\theta}_k)^{T} F^{\prime}(\boldsymbol{\theta}_k) +\frac{1}{t}\boldsymbol{I} \right)^{-1} \left( \rho F^{\prime}(\boldsymbol{\theta}_k)^{T} \left( \boldsymbol{\mu}^{i+1} - F(\boldsymbol{\theta}_k) + \frac{1}{\rho}\boldsymbol{\lambda}^i \right)\right).
    \end{equation}

    As we can see, the iterations of the ADMM for the non-smooth convex subproblems have explicit formulas, which is one of the advantages of the ADMM.
    Defining the primal residual of the optimality conditions at iteration $i$ as
  \begin{equation} \label{pr}
      \boldsymbol{r}^{i} = \boldsymbol{\mu}^{i} - F(\boldsymbol{\theta}_k) - F^{\prime}(\boldsymbol{\theta}_k)\Delta\boldsymbol{\theta}^{i},
  \end{equation}
  and the dual residual at iteration $i$ as
  \begin{equation} \label{dr}
      \boldsymbol{s}^{i} = \rho F^{\prime}(\boldsymbol{\theta}_k)(\Delta\boldsymbol{\theta}^{i}-\Delta\boldsymbol{\theta}^{i-1}),
  \end{equation}
 we set the stopping criterion as $\Vert\boldsymbol{r}^{i}\Vert \approx 0$ and $\Vert\boldsymbol{s}^{i}\Vert \approx 0$. 

    \renewcommand{\thealgorithm}{A$^*$}
    \begin{breakablealgorithm}
    \caption{\hspace{-0.3em}. ADMM for non-smooth convex subproblems.}
    \label{admm}
	\renewcommand{\algorithmicrequire}{\textbf{Input:}}
	\renewcommand{\algorithmicensure}{\textbf{Output:}}
	\begin{algorithmic}[1]
	\REQUIRE Numbers $m$ and $t$, matrices $F(\boldsymbol{\theta}_k)$ and $F^{\prime}(\boldsymbol{\theta}_k)$, non-smooth convex function $\mathtt{L}$.
	\STATE \textbf{Initialization}: $\rho>0$, $\Delta\boldsymbol{\theta}^0\in\mathbb{R}^n$, $\boldsymbol{\lambda}^0\in\mathbb{R}^m$,  $\epsilon>0$, $i\gets 0$;
	\REPEAT
	\STATE $i\gets i+1$;
	\STATE calculate \hspace{0.1em}$\boldsymbol{\mu}^i$\hspace{0.1em} from (\ref{z1}); 
	\STATE calculate \hspace{0.1em}$\Delta\boldsymbol{\theta}^i$\hspace{0.1em} from (\ref{d1}); 
	\STATE calculate \hspace{0.1em}$\boldsymbol{\lambda}^i$\hspace{0.1em} from (\ref{eta1}); 
	\STATE calculate \hspace{0.05em}$\boldsymbol{r}^i$\hspace{0.05em} and \hspace{0.05em}$\boldsymbol{s}^i$\hspace{0.05em} from (\ref{pr}) and (\ref{dr}), respectively;
	\UNTIL $\Vert\boldsymbol{r}^i\Vert<\epsilon$ and $\Vert\boldsymbol{s}^i\Vert<\epsilon$;
	\STATE $\Delta\boldsymbol{\theta}_k\gets \Delta\boldsymbol{\theta}^{i}$.
    \ENSURE $\Delta\boldsymbol{\theta}_k$.
	\end{algorithmic}
\end{breakablealgorithm}

{\bf 3.3 A Globalization Strategy for Algorithm \ref{lpa}}. Moreover, we show the following algorithm by employing the globalized LPA (GLPA) that adopts a backtracking line-search as a globalization strategy. The choice of the stepsize is based on the virtue of the backtracking line-search, which guarantees the monotone decrease of the objective function at each iteration. As a result, it ensures that the GLPA is a descent algorithm. In the algorithm implementation, the backtracking strategy finds the first point satisfying the inequality (\ref{rule}) by continuously decreasing the trial stepsize in an exponential way. That makes the stepsize with the descent property as large as possible.

    \renewcommand{\thealgorithm}{2}
    \begin{breakablealgorithm}
    \caption{\hspace{-0.3em}. GLPA for sigmoid networks.}
	\label{glpa}
	\renewcommand{\algorithmicrequire}{\textbf{Input:}}
	\renewcommand{\algorithmicensure}{\textbf{Output:}}
	\begin{algorithmic}[1]
	\REQUIRE Model $f$, training dataset $D=\{(\boldsymbol{x}_i,y_i)\}_{i=1}^{m}$, outer function $\mathbb{L}$, inner function $F$.
	\STATE \textbf{Initialization}: $t>0$, $c,\tau\in(0,1)$, $\boldsymbol{\theta}_0\in\mathbb{R}^n$, $k\gets 0$, accept $\gets$ false;
	\WHILE{not accept}
	    \STATE calculate the search direction
		\begin{equation*}
         \Delta\boldsymbol{\theta}_k = \mathop{\arg\min}\limits_{\Delta\boldsymbol{\theta}\in\mathbb{R}^n} \left\{ \mathbb{L} (F(\boldsymbol{\theta}_k)+F^{\prime}(\boldsymbol{\theta}_k)\Delta\boldsymbol{\theta})+\frac{1}{2t}\Vert \Delta\boldsymbol{\theta}\Vert^2 \right\};
        \end{equation*}
        \IF{$\Delta\boldsymbol{\theta}_k=\boldsymbol{0}$}
            \STATE accept $\gets$ true;
        \ENDIF
        \STATE $\eta\gets 1/\tau$;
        \REPEAT
		\STATE $\eta\gets\tau\eta$
		\UNTIL
		 \begin{equation} \label{rule}
         \mathbb{L}(F(\boldsymbol{\theta}_k+\eta\Delta\boldsymbol{\theta}_k))- \mathbb{L}(F(\boldsymbol{\theta}_k)) \leq c \hspace{0.1em}\eta\left(
         \mathbb{L}(F(\boldsymbol{\theta}_k)+F^{\prime}(\boldsymbol{\theta}_k)\Delta\boldsymbol{\theta}_k) + \frac{1}{2t}\Vert \Delta\boldsymbol{\theta}_k\Vert^2 - \mathbb{L}(F(\boldsymbol{\theta}_k)) \right);
        \end{equation}\ \\[-18pt]
        \STATE $\eta_k\gets\eta$;
         \STATE $\boldsymbol{\theta}_{k+1} \gets \boldsymbol{\theta}_k + \eta_k\Delta\boldsymbol{\theta}_k$;
        \STATE $k\gets k+1$;
	\ENDWHILE
    \STATE $\boldsymbol{\theta}^{*}\gets \boldsymbol{\theta}_k$.
    \ENSURE $f(\boldsymbol{x};\boldsymbol{\theta}^{*})$.
	\end{algorithmic}
\end{breakablealgorithm}

\section{Convergence Analysis}
\label{part4}

In this section, we prove some convergence properties of the proposed algorithms under the assumptions of the weak sharp minima and the regularity condition or full row rank of the Jacobian matrix, a stronger condition. Before giving the main results, we introduce the following useful definitions and lemmas.\\[-10pt]


{\bf 4.1 Theoretical Foundations of LPA-type Algorithms.} Here we consider the convex composite optimization of the form
\begin{equation} \label{cco}
    \min_{\boldsymbol{\omega}\in\mathbb{R}^b} ~h(G(\boldsymbol{\omega})),
\end{equation}
where the inner function $G:\mathbb{R}^b\rightarrow\mathbb{R}^l$ is continuously differentiable, and the outer function $h:\mathbb{R}^{l}\rightarrow\mathbb{R}$ is convex. It is a more general mathematical form of the problem (\ref{composite}).

 First, we introduce the concept of the Lipschitz continuous gradient, which has played an important role in investigating the convergence behavior of many optimization algorithms.
For a differentiable function $G$ and $\Omega \subseteq \mathbb{R}^b$, if there exists an $K>0$ such that
\begin{equation*}
    \Vert G^{\prime}(\tilde{\boldsymbol{\omega}}_1)-G^{\prime}(\tilde{\boldsymbol{\omega}}_2)\Vert \leq K \Vert \tilde{\boldsymbol{\omega}}_1-\tilde{\boldsymbol{\omega}}_2\Vert ~\text{for each}~ \tilde{\boldsymbol{\omega}}_1,\tilde{\boldsymbol{\omega}}_2\in\Omega,
\end{equation*}
we say that $G$ is \emph{K-smooth} or has a \emph{Lipschitz continuous gradient} with modulus $K$ on $\Omega$.

Next, we give the notion of the weak sharp minima introduced in (\cite{Burke1993}), which has far-reaching consequences for the convergence analysis of many iterative procedures. For a function $h$, the minimum value and the set of minima for $h$, denoted by $h_\text{min}$ and $C_{h}$, are defined by
    \begin{equation*}
       h_{\text{min}} := \min_{\boldsymbol{z}\in\mathbb{R}^l} h(\boldsymbol{z})~~\text{and}~~ C_{h}:= \mathop{\arg\min}_{\boldsymbol{z}\in\mathbb{R}^l}\hspace{0.15em} h(\boldsymbol{z}).
    \end{equation*}
    Let $C_{h} \subseteq S \subseteq \mathbb{R}^l$, if there exist $\alpha>0$ and $\beta\geq 1$ such that
    \begin{equation*}
       h(\boldsymbol{z})\geq h_{\text{min}}+\alpha\hspace{0.02em}\text{dist}^{\hspace{0.02em}\beta} (\boldsymbol{z},C_{h}) ~\text{for each}~\boldsymbol{z}\in S,
    \end{equation*}
    where $\text{dist} (\boldsymbol{z},C) := \inf_{\boldsymbol{c}\in C}\Vert \boldsymbol{z}-\boldsymbol{c}\Vert$, then we say that $C_{h}$ is the set of \emph{weak sharp minima} of order $\beta$ for $h$ on $S$ with modulus $\alpha$.

 We now introduce the regularity condition proposed in (\cite{Burke1995}), which is a crucial assumption applied to establish the convergence of several convex composite optimization algorithms. Let $h$ and $G$ be defined by {(\ref{cco})}, then a point $\bar{\boldsymbol{\omega}}\in\mathbb{R}^b$ is said to be a \emph{regular point} of the inclusion $G(\boldsymbol{\omega})\in C_{h}$ if
    \begin{equation*}
        \text{\rm ker}(G^{\prime}(\bar{\boldsymbol{\omega}})^T) \cap (C_{h}-G(\bar{\boldsymbol{\omega}}))^{\ominus} = \{\boldsymbol{0}\},
    \end{equation*}
    where ker($W$) $:=\{\boldsymbol{y}: W\boldsymbol{y}=\boldsymbol{0}\}$ is the nullspace of $W$, and $Z^{\ominus} := \{ \boldsymbol{y}:\left<\boldsymbol{y},\boldsymbol{z}\right>\leq 0, \forall \boldsymbol{z}\in Z\}$ is the negative polar of $Z$.

  In the following lemmas, we give the local convergence of the LPA and the global convergence of the GLPA for solving optimization (\ref{cco}). They are based on three main conditions including Lipschitz continuous gradient, weak sharp minima and quasi-regularity or regularity condition. Note that the definition of quasi-regularity condition will only be described in the proof of Theorem \ref{local}. Since this condition is hard to verify in practice, we replace it with the regularity condition in the related theorem for sigmoid networks.

\begin{lemma} \label{lem_lpa} {\bf\rm(\cite{Hu2016}, Corollary 14)} Let \hspace{0.15em}$\bar{\boldsymbol{\omega}}\in\mathbb{R}^b$ satisfy $G(\bar{\boldsymbol{\omega}})\in C_{h}$, and let $C_{h}$ be the set of weak sharp minima of order $\beta$ for $h$ near $G(\bar{\boldsymbol{\omega}})$ with constant $\alpha$. Suppose that $G$ is continuously differentiable with a Lipschitz continuous gradient $G^{\prime}$ near $\bar{\boldsymbol{\omega}}$, and that $\bar{\boldsymbol{\omega}}$ is a quasi-regular point of the inclusion with constant $\delta$. Suppose further that $\beta\in[1,2)$ or the stepsize $t>\frac{2\delta^2}{\alpha}$ {\rm(}if $\beta=2${\rm)}. Then there exists a neighborhood $N(\bar{\boldsymbol{\omega}})$ of \hspace{0.15em}$\bar{\boldsymbol{\omega}}$ such that, for any $\boldsymbol{\omega}_0\in N(\bar{\boldsymbol{\omega}})$, the sequence $\{\boldsymbol{\omega}_k\}$ generated by the {\rm LPA} with initial point $\boldsymbol{\omega}_0$ converges at a rate of $\frac{2}{\beta}$ to a solution $\boldsymbol{\omega}^{*}$ satisfying $G(\boldsymbol{\omega}^{*})\in C_{\hspace{0em}h}$. \\[-12pt]
\end{lemma}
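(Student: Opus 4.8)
The plan is to follow the local error-bound strategy behind \cite{Hu2016}: reduce everything to a single scalar recursion on the residual $r_k := \mathrm{dist}(G(\boldsymbol{\omega}_k), C_h)$, show that it decays at rate $2/\beta$, and then recover convergence of the iterates themselves. Throughout I would fix a closed ball $N(\bar{\boldsymbol{\omega}})$ on which the three hypotheses hold simultaneously: $G'$ is $K$-smooth, $h$ is locally Lipschitz with some constant $L$, and the quasi-regularity of $\bar{\boldsymbol{\omega}}$ supplies the bound $\delta$. Shrinking $N(\bar{\boldsymbol{\omega}})$ at the end will make $r_0$ small enough to trigger the contraction and to keep the whole orbit inside $N(\bar{\boldsymbol{\omega}})$.

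The per-step estimate is the heart of the argument. By quasi-regularity, for each $\boldsymbol{\omega}_k \in N(\bar{\boldsymbol{\omega}})$ there is a direction $\boldsymbol{d}_k$ with $G(\boldsymbol{\omega}_k) + G'(\boldsymbol{\omega}_k)\boldsymbol{d}_k \in C_h$ and $\|\boldsymbol{d}_k\| \le \delta\, r_k$. Since $\Delta\boldsymbol{\omega}_k$ minimizes $\boldsymbol{d} \mapsto h(G(\boldsymbol{\omega}_k)+G'(\boldsymbol{\omega}_k)\boldsymbol{d}) + \frac{1}{2t}\|\boldsymbol{d}\|^2$, comparing its value at $\Delta\boldsymbol{\omega}_k$ with its value at $\boldsymbol{d}_k$ (where the $h$-term equals $h_{\min}$) yields
\begin{equation*}
h(G(\boldsymbol{\omega}_k)+G'(\boldsymbol{\omega}_k)\Delta\boldsymbol{\omega}_k) - h_{\min} + \frac{1}{2t}\|\Delta\boldsymbol{\omega}_k\|^2 \le \frac{\delta^2 r_k^2}{2t}.
\end{equation*}
Because $h \ge h_{\min}$, this simultaneously gives the step bound $\|\Delta\boldsymbol{\omega}_k\| \le \delta r_k$ and a bound of order $r_k^2$ on the linearized objective gap.

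To pass from the linearization back to the true iterate I would use $K$-smoothness of $G$, which gives $\|G(\boldsymbol{\omega}_{k+1}) - G(\boldsymbol{\omega}_k) - G'(\boldsymbol{\omega}_k)\Delta\boldsymbol{\omega}_k\| \le \tfrac{K}{2}\|\Delta\boldsymbol{\omega}_k\|^2 \le \tfrac{K\delta^2}{2} r_k^2$, a genuinely higher-order term. Applying the weak sharp minima inequality at the point $G(\boldsymbol{\omega}_k)+G'(\boldsymbol{\omega}_k)\Delta\boldsymbol{\omega}_k$ converts the objective-gap bound into $\mathrm{dist}^{\beta}(G(\boldsymbol{\omega}_k)+G'(\boldsymbol{\omega}_k)\Delta\boldsymbol{\omega}_k, C_h) \le \frac{\delta^2 r_k^2}{2t\alpha}$, and the triangle inequality then produces the master recursion
\begin{equation*}
r_{k+1} \le \left(\frac{\delta^2 r_k^2}{2t\alpha}\right)^{1/\beta} + \frac{K\delta^2}{2}\, r_k^2 .
\end{equation*}
For $\beta \in [1,2)$ the first term is $O(r_k^{2/\beta})$ with $2/\beta > 1$ and dominates the higher-order remainder, so for $r_0$ small the residual contracts superlinearly at order $2/\beta$ with no restriction on $t$. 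For $\beta = 2$ the leading term is linear, of size $\frac{\delta}{\sqrt{2t\alpha}}\, r_k$, so contraction requires the stepsize threshold $t > 2\delta^2/\alpha$ (up to the paper's normalization constant) to force the factor below $1$, the $r_k^2$ term being absorbed by shrinking $N(\bar{\boldsymbol{\omega}})$.

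Once $r_k \to 0$ at rate $2/\beta$, the step bound $\|\boldsymbol{\omega}_{k+1}-\boldsymbol{\omega}_k\| = \|\Delta\boldsymbol{\omega}_k\| \le \delta r_k$ makes $\{\boldsymbol{\omega}_k\}$ a Cauchy sequence whose tail length is super-geometrically summable, so it converges to some $\boldsymbol{\omega}^{*}$; continuity of $G$ and closedness of $C_h$ give $G(\boldsymbol{\omega}^{*}) \in C_h$, and the same summation transfers the rate $2/\beta$ from $r_k$ to $\|\boldsymbol{\omega}_k - \boldsymbol{\omega}^{*}\|$. I expect the main obstacle to be the bookkeeping that makes $N(\bar{\boldsymbol{\omega}})$ genuinely invariant: one must choose the radius so that quasi-regularity, $K$-smoothness and local Lipschitzness of $h$ all remain valid along the orbit while the recursion already contracts — a bootstrapping step that is delicate precisely in the borderline case $\beta = 2$, where the constant in the stepsize condition $t > 2\delta^2/\alpha$ must be tracked exactly rather than absorbed.
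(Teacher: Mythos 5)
The first thing to say is that the paper contains no proof of this statement at all: Lemma~\ref{lem_lpa} is imported verbatim from \cite{Hu2016} (Corollary~14) and is used as an external engine, the paper's own work being only to verify its hypotheses for sigmoid networks in Theorem~\ref{local}. So there is no internal argument to compare against; what you have written is a blind reconstruction of the proof in the cited reference, and in outline it is the correct one. Your master recursion $r_{k+1} \le \bigl(\delta^2 r_k^2/(2t\alpha)\bigr)^{1/\beta} + \tfrac{K\delta^2}{2} r_k^2$ is exactly the mechanism of the LPA analysis: quasi-regularity supplies a feasible comparison direction for the strongly convex subproblem, optimality of $\Delta\boldsymbol{\omega}_k$ yields simultaneously the step bound $\Vert\Delta\boldsymbol{\omega}_k\Vert \le \delta r_k$ and an $O(r_k^2)$ bound on the linearized objective gap, the weak sharp minima inequality converts that gap into a distance raised to the power $\beta$, and the Lipschitz-gradient remainder is genuinely higher order. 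The case split ($\beta\in[1,2)$ needing no stepsize restriction, $\beta=2$ needing a threshold on $t$) falls out of the recursion just as you describe.

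Two points need repair before your sketch is a complete proof of the statement as quoted. First, your $\beta=2$ threshold comes out as $t > \delta^2/(2\alpha)$, whereas the lemma asserts $t > 2\delta^2/\alpha$; the factor of four is not mere ``normalization'' but reflects how the comparison direction is chosen relative to the quasi-regularity constant (for instance, taking $\Vert\boldsymbol{d}_k\Vert \le 2\,\mathrm{dist}(\boldsymbol{0},\Pi(\boldsymbol{\omega}_k)) \le 2\delta r_k$ when one does not argue that the distance is attained reproduces the stated constant), so you must either track this constant explicitly or observe that $\Pi(\boldsymbol{\omega}_k)$ is closed, the infimum is attained, and your sharper threshold implies the stated one. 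Second, your final step transfers the rate from the residuals $r_k$ to the iterates ``by the same summation,'' but tail summation alone gives convergence of $\{\boldsymbol{\omega}_k\}$, not its order; to get order $2/\beta$ for the iterates you also need the reverse bound $r_k \le L_G\Vert\boldsymbol{\omega}_k - \boldsymbol{\omega}^*\Vert$ (local Lipschitz continuity of $G$ together with $G(\boldsymbol{\omega}^*)\in C_h$), which closes the loop $\Vert\boldsymbol{\omega}_{k+1}-\boldsymbol{\omega}^*\Vert \lesssim r_{k+1} \lesssim r_k^{2/\beta} \lesssim \Vert\boldsymbol{\omega}_k-\boldsymbol{\omega}^*\Vert^{2/\beta}$. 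With those two fixes, and the invariance bookkeeping you already flag, your argument faithfully recovers the cited result.
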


\begin{lemma} \label{lem_glpa}
{\bf\rm(\cite{Hu2016}, Theorem 18)}
Let $\{\boldsymbol{\omega}_k\}$ be a sequence generated by the {\rm GLPA} and assume that $\{\boldsymbol{\omega}_k\}$ has a cluster point \hspace{0.1em}$\boldsymbol{\omega}^{*}$. Suppose that $\beta\in[1,2)$ and that $C_{h}$ be the set of weak sharp minima of order $\beta$ for $h$ near $G(\boldsymbol{\omega}^{*})$. Suppose further that $G$ is continuously differentiable with a Lipschitz continuous gradient $G^{\prime}$ near $\boldsymbol{\omega}^{*}$, and that \hspace{0.1em}$\boldsymbol{\omega}^{*}$ is a regular point of the inclusion. Then $G(\boldsymbol{\omega}^{*})\in C_{h}$, and $\{\boldsymbol{\omega}_k\}$ converges to $\boldsymbol{\omega}^{*}$ at a rate of $\frac{2}{\beta}$.
\end{lemma}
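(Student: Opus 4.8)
The plan is to run the standard global-convergence template for a line-search descent method and then bootstrap the local rate from Lemma~\ref{lem_lpa}. Write $M_k(\Delta\boldsymbol{\omega}):=h(G(\boldsymbol{\omega}_k)+G'(\boldsymbol{\omega}_k)\Delta\boldsymbol{\omega})+\frac{1}{2t}\|\Delta\boldsymbol{\omega}\|^2$ for the strongly convex model minimized at each iterate and $\Delta_k:=M_k(\Delta\boldsymbol{\omega}_k)-h(G(\boldsymbol{\omega}_k))$ for the predicted decrease appearing on the right of (\ref{rule}). Since $\Delta\boldsymbol{\omega}=\boldsymbol{0}$ is feasible for the subproblem with value $h(G(\boldsymbol{\omega}_k))$ and the minimizer of $M_k$ is unique, we always have $\Delta_k\le0$, with equality precisely when $\Delta\boldsymbol{\omega}_k=\boldsymbol{0}$; thus $\Delta\boldsymbol{\omega}_k$ is a genuine descent direction whenever it is nonzero.

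First I would verify the backtracking loop is well defined. Using convexity of $h$ to write $G(\boldsymbol{\omega}_k)+\eta G'(\boldsymbol{\omega}_k)\Delta\boldsymbol{\omega}_k$ as a convex combination of $G(\boldsymbol{\omega}_k)$ and $G(\boldsymbol{\omega}_k)+G'(\boldsymbol{\omega}_k)\Delta\boldsymbol{\omega}_k$, together with the $K$-smoothness of $G$ to bound $\|G(\boldsymbol{\omega}_k+\eta\Delta\boldsymbol{\omega}_k)-G(\boldsymbol{\omega}_k)-\eta G'(\boldsymbol{\omega}_k)\Delta\boldsymbol{\omega}_k\|$ by a term of order $\eta^2$, one shows that (\ref{rule}) holds for all sufficiently small $\eta$, so backtracking terminates. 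The accepted stepsizes then produce a nonincreasing, bounded-below sequence $\{h(G(\boldsymbol{\omega}_k))\}$, which converges; summing the sufficient-decrease inequalities gives $\sum_k\eta_k(-\Delta_k)<\infty$, and the standard Armijo argument (if $\eta_k$ fails to stay bounded away from $0$, the failure of (\ref{rule}) at the previous trial $\eta_k/\tau$ combined with the $O(\eta^2)$ smoothness bound still forces the reduction to dominate) yields $\Delta_k\to0$.

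Next I would pass to the cluster point $\boldsymbol{\omega}^*$ along a subsequence $\boldsymbol{\omega}_{k_j}\to\boldsymbol{\omega}^*$. Continuity of $G$, $G'$ and of the argmin map of a strongly convex problem gives $\Delta\boldsymbol{\omega}_{k_j}\to\Delta\boldsymbol{\omega}^*$ and $\Delta_{k_j}\to\Delta^*$, so $\Delta_k\to0$ forces $\Delta^*=0$ and hence $\Delta\boldsymbol{\omega}^*=\boldsymbol{0}$: the subproblem at $\boldsymbol{\omega}^*$ is minimized at the origin. Its optimality condition reads $\boldsymbol{0}\in G'(\boldsymbol{\omega}^*)^T\partial h(G(\boldsymbol{\omega}^*))$, so there is $\boldsymbol{g}\in\partial h(G(\boldsymbol{\omega}^*))$ with $\boldsymbol{g}\in\mathrm{ker}(G'(\boldsymbol{\omega}^*)^T)$. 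For every $\boldsymbol{c}\in C_h$ the subgradient inequality gives $\langle\boldsymbol{g},\boldsymbol{c}-G(\boldsymbol{\omega}^*)\rangle\le h(\boldsymbol{c})-h(G(\boldsymbol{\omega}^*))=h_{\min}-h(G(\boldsymbol{\omega}^*))\le0$, that is $\boldsymbol{g}\in(C_h-G(\boldsymbol{\omega}^*))^{\ominus}$. The regularity condition $\mathrm{ker}(G'(\boldsymbol{\omega}^*)^T)\cap(C_h-G(\boldsymbol{\omega}^*))^{\ominus}=\{\boldsymbol{0}\}$ then forces $\boldsymbol{g}=\boldsymbol{0}$, whence $\boldsymbol{0}\in\partial h(G(\boldsymbol{\omega}^*))$ and, by convexity of $h$, $G(\boldsymbol{\omega}^*)\in C_h$. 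This convex-analytic step is the conceptual heart of the argument.

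Finally, to obtain the rate, I would invoke Lemma~\ref{lem_lpa}: regularity implies quasi-regularity and, with $\beta\in[1,2)$ and $G(\boldsymbol{\omega}^*)\in C_h$ now established, all of its hypotheses hold at $\boldsymbol{\omega}^*$. The remaining crux is the handoff. Once the subsequence enters the neighborhood $N(\boldsymbol{\omega}^*)$ supplied by that lemma, I must argue that the unit step $\eta_k=1$ is accepted by (\ref{rule}) --- near a solution the true reduction $h(G(\boldsymbol{\omega}_k+\Delta\boldsymbol{\omega}_k))-h(G(\boldsymbol{\omega}_k))$ agrees with the predicted $\Delta_k$ up to higher order, so the test with $c\in(0,1)$ passes at $\eta=1$ --- after which the GLPA coincides with the pure LPA and a standard capture argument promotes subsequential convergence into convergence of the whole sequence $\{\boldsymbol{\omega}_k\}$ to $\boldsymbol{\omega}^*$ at rate $\frac{2}{\beta}$. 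I expect this handoff, namely establishing eventual acceptance of the full step and thereby whole-sequence capture, together with the nonsmooth Armijo bookkeeping of the second paragraph, to be the main obstacle, whereas the regularity argument of the third paragraph is clean.
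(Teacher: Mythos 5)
The paper contains no proof of this lemma: it is imported verbatim from \cite{Hu2016} (Theorem 18), so there is no internal argument to compare yours against; what follows judges your proposal on its own merits against the strategy of the cited source. Your skeleton is the right one and is essentially the strategy used there: monotone decrease from the backtracking rule, predicted decrease $\Delta_k\to 0$, stationarity of the subproblem at the cluster point, and the polar-cone argument --- a subgradient $\boldsymbol{g}\in\partial h(G(\boldsymbol{\omega}^{*}))\cap\mathrm{ker}(G^{\prime}(\boldsymbol{\omega}^{*})^{T})\cap (C_h-G(\boldsymbol{\omega}^{*}))^{\ominus}$ forced to vanish by regularity, giving $G(\boldsymbol{\omega}^{*})\in C_h$. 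That third paragraph of yours is correct and clean.

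The genuine gap is at the step you yourself flag as the crux, and it is worse than bookkeeping: your justification for eventual acceptance of the unit step is false as stated, because it never uses the hypothesis $\beta\in[1,2)$ --- the one assumption that distinguishes this lemma from Lemma \ref{lem_lpa}. The linearization error in the test (\ref{rule}) at $\eta=1$ is of order $\Vert\Delta\boldsymbol{\omega}_k\Vert^2$ (Lipschitz continuity of $G^{\prime}$ plus local Lipschitzness of the convex $h$), while quasi-regularity bounds $\Vert\Delta\boldsymbol{\omega}_k\Vert\leq\delta\,\mathrm{dist}(G(\boldsymbol{\omega}_k),C_h)$ and the weak sharp minima of order $\beta$ gives $-\Delta_k\geq\alpha\,\mathrm{dist}^{\beta}(G(\boldsymbol{\omega}_k),C_h)-\frac{\delta^2}{2t}\,\mathrm{dist}^{2}(G(\boldsymbol{\omega}_k),C_h)$. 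So ``true reduction agrees with $\Delta_k$ up to higher order'' yields acceptance of $\eta=1$ only when $\mathrm{dist}^{2}=o(\mathrm{dist}^{\beta})$, i.e.\ only when $\beta<2$; for $\beta=2$ the error term is of the \emph{same} order as $-\Delta_k$ and the test at $\eta=1$ can fail no matter how close the iterate is to the solution set --- which is exactly why Lemma \ref{lem_lpa} tolerates $\beta=2$ only under the large-stepsize proviso $t>\frac{2\delta^2}{\alpha}$, and why the present lemma excludes $\beta=2$ altogether. A correct handoff must therefore run quantitatively through the chain: quasi-regularity $\Rightarrow$ step bounded by the distance; weak sharp minima with $\beta<2$ $\Rightarrow$ predicted decrease dominating the squared distance; hence unit-step acceptance near the solution set, followed by the induction (capture) argument keeping the entire tail inside that region so the GLPA tail really is an LPA sequence. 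Without that order comparison, the concluding rate claim is unsupported.
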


Note that $\beta\in[1,2)$ in Lemma \ref{lem_glpa} is lightly different from $\beta\in[1,2]$ in Lemma \ref{lem_lpa}, but both of them can find a globally optimal solution to optimization (\ref{cco}) since that $G(\boldsymbol{\omega}^{*})\in C_{h}$, equivalently, $h(G(\boldsymbol{\omega}^{*}))=h_{\text{min}}={(h\circ G)}_{\text{min}}$. \\[-5pt]

{\bf 4.2 Convergence Analysis for Sigmoid Networks.}
Let $B(\boldsymbol{z},r)$ denote an open ball of radius $r$ centered at $\boldsymbol{z}$, then we establish the local convergence of Algorithm \ref{lpa} by virtue of Lemma \ref{lem_lpa}.

\begin{theorem} \label{local} {\rm\bf (Local Convergence)}. Let $\beta\in[1,2]$ and $r>0$. Let $\{\boldsymbol{\theta}_k\}$ be a sequence generated by {\rm Algorithm} {\rm\ref{lpa}}, and \hspace{0.15em}$\bar{\boldsymbol{\theta}}\in\mathbb{R}^n$ be such that $F(\bar{\boldsymbol{\theta}})\in C_{\hspace{0.1em}\mathbb{L}}$ and $C_{\hspace{0.1em}\mathbb{L}}$ is the set of weak sharp minima of order $\beta$ for $\mathbb{L}$ on $B(F(\bar{\boldsymbol{\theta}}),r)$. If \hspace{0.15em}$\bar{\boldsymbol{\theta}}$ is a regular point of the inclusion, then there exist $t_0\geq 0$ and $\bar{r}>0$ such that for any $t>t_0$ and initial point \hspace{0.1em}$\boldsymbol{\theta}_0\in B(\bar{\boldsymbol{\theta}},\bar{r})$, the sequence $\{\boldsymbol{\theta}_k\}$ converges at a rate of $\frac{2}{\beta}$ to a globally optimal solution $\boldsymbol{\theta}^{*}$ and $F(\boldsymbol{\theta}^{*})\in C_{\hspace{0.1em}\mathbb{L}}$.
\end{theorem}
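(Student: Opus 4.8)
The plan is to obtain Theorem \ref{local} as a direct specialization of Lemma \ref{lem_lpa} to the convex composite structure (\ref{composite}) of the sigmoid network. Under the identification $\boldsymbol{\omega}\leftrightarrow\boldsymbol{\theta}$, $G\leftrightarrow F$, $h\leftrightarrow\mathbb{L}$, $b\leftrightarrow n$ and $l\leftrightarrow m$, the general problem (\ref{cco}) becomes exactly (\ref{composite}) and Algorithm \ref{lpa} is precisely the LPA applied to (\ref{composite}) with subproblem (\ref{subp}). Thus it suffices to check that each hypothesis of Lemma \ref{lem_lpa} holds at $\bar{\boldsymbol{\theta}}$, fix the threshold $t_0$ from the order $\beta$, and then translate the conclusion $F(\boldsymbol{\theta}^{*})\in C_{\mathbb{L}}$ into a statement of global optimality for $E$.

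First I would verify the smoothness hypothesis. The standard sigmoid $\sigma$ is $C^{\infty}$ with bounded derivatives of every order on $\mathbb{R}$, so each component $f(\boldsymbol{x}_i;\boldsymbol{\theta})$ of $F$ is $C^{\infty}$ in $\boldsymbol{\theta}$, and on any bounded neighborhood of $\bar{\boldsymbol{\theta}}$ the second derivatives of $F$ are bounded. Hence $F$ is continuously differentiable and its Jacobian $F^{\prime}$ is Lipschitz continuous near $\bar{\boldsymbol{\theta}}$, i.e. $F$ is $K$-smooth on a neighborhood of $\bar{\boldsymbol{\theta}}$ for some modulus $K>0$. The weak sharp minima hypothesis, assumed to hold on the ball $B(F(\bar{\boldsymbol{\theta}}),r)$ with order $\beta$ and modulus $\alpha$, supplies exactly the ``near $F(\bar{\boldsymbol{\theta}})$'' requirement of the lemma.

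The crucial step is the bridge between the regularity condition assumed in the statement and the quasi-regularity required by Lemma \ref{lem_lpa}. Here I would first recall the definition of a quasi-regular point of the inclusion $F(\boldsymbol{\theta})\in C_{\mathbb{L}}$: there exist a neighborhood of $\bar{\boldsymbol{\theta}}$ and a constant $\delta>0$ furnishing the local error bound $\text{dist}(\boldsymbol{\theta}, F^{-1}(C_{\mathbb{L}}))\leq\delta\,\text{dist}(F(\boldsymbol{\theta}),C_{\mathbb{L}})$ on that neighborhood. The main step is then to show that the regularity condition $\text{ker}(F^{\prime}(\bar{\boldsymbol{\theta}})^{T})\cap(C_{\mathbb{L}}-F(\bar{\boldsymbol{\theta}}))^{\ominus}=\{\boldsymbol{0}\}$ implies quasi-regularity with a finite modulus $\delta$; this is where a polar-cone/separation argument is invoked to convert the transversality of $\text{ran}(F^{\prime}(\bar{\boldsymbol{\theta}}))$ and the tangent cone of $C_{\mathbb{L}}$ at $F(\bar{\boldsymbol{\theta}})$ into the quantitative error bound. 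I expect this implication to be the main obstacle, since it requires the nontrivial passage from a linear-algebraic intersection condition to a metric-regularity estimate.

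Having secured quasi-regularity with constant $\delta$, I would fix the threshold according to $\beta$: take $t_0=0$ when $\beta\in[1,2)$, and $t_0=2\delta^2/\alpha$ when $\beta=2$, so that the stepsize requirement of Lemma \ref{lem_lpa} is met for every $t>t_0$. Lemma \ref{lem_lpa} then produces a neighborhood $N(\bar{\boldsymbol{\theta}})$; choosing $\bar{r}>0$ with $B(\bar{\boldsymbol{\theta}},\bar{r})\subseteq N(\bar{\boldsymbol{\theta}})$ guarantees that for any $\boldsymbol{\theta}_0\in B(\bar{\boldsymbol{\theta}},\bar{r})$ the iterates $\{\boldsymbol{\theta}_k\}$ converge at rate $2/\beta$ to some $\boldsymbol{\theta}^{*}$ with $F(\boldsymbol{\theta}^{*})\in C_{\mathbb{L}}$. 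Finally, as noted after Lemma \ref{lem_glpa}, $F(\boldsymbol{\theta}^{*})\in C_{\mathbb{L}}$ gives $E(\boldsymbol{\theta}^{*})=\mathbb{L}(F(\boldsymbol{\theta}^{*}))=\mathbb{L}_{\min}=(\mathbb{L}\circ F)_{\min}$, so $\boldsymbol{\theta}^{*}$ is a globally optimal solution of (\ref{composite}), which completes the argument.
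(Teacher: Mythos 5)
Your overall route is the same as the paper's: specialize Lemma \ref{lem_lpa} to (\ref{composite}) under the identification $G\leftrightarrow F$, $h\leftrightarrow\mathbb{L}$, verify the Lipschitz-gradient and weak-sharp-minima hypotheses, pass from the assumed regularity of $\bar{\boldsymbol{\theta}}$ to the quasi-regularity that the lemma actually requires, set $t_0=2\delta^2/\alpha$ when $\beta=2$ (else $t_0=0$), shrink $\bar{r}$ so that $B(\bar{\boldsymbol{\theta}},\bar{r})$ lies inside the neighborhood the lemma furnishes, and read off global optimality from $F(\boldsymbol{\theta}^{*})\in C_{\hspace{0.1em}\mathbb{L}}$. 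Two points, however, need correction. First, the definition of quasi-regularity you recall is not the one Lemma \ref{lem_lpa} uses. Quasi-regularity in the sense of \cite{Burke1995} and \cite{Hu2016} is a statement about the \emph{linearized} inclusion: writing $\Pi(\boldsymbol{\theta}):=\{\Delta\boldsymbol{\theta}\in\mathbb{R}^n: F(\boldsymbol{\theta})+F^{\prime}(\boldsymbol{\theta})\Delta\boldsymbol{\theta}\in C_{\hspace{0.1em}\mathbb{L}}\}$, one requires
\begin{equation*}
\Pi(\boldsymbol{\theta})\neq\emptyset \quad\text{and}\quad \text{dist}(\boldsymbol{0},\Pi(\boldsymbol{\theta}))\leq \delta\, \text{dist}(F(\boldsymbol{\theta}),C_{\hspace{0.1em}\mathbb{L}})
\end{equation*}
for all $\boldsymbol{\theta}$ in a neighborhood of $\bar{\boldsymbol{\theta}}$. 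Your condition $\text{dist}(\boldsymbol{\theta},F^{-1}(C_{\hspace{0.1em}\mathbb{L}}))\leq\delta\,\text{dist}(F(\boldsymbol{\theta}),C_{\hspace{0.1em}\mathbb{L}})$ is a nonlinear error bound on $F$ itself; it is not the same condition, and it is not what controls the size of the LPA search direction (the subproblem is built on the linearization), so a polar-cone argument aimed at your version would discharge the wrong hypothesis.

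Second, the implication you flag as ``the main obstacle'' --- that the regularity condition at $\bar{\boldsymbol{\theta}}$ implies quasi-regularity --- is a known result: it is precisely Proposition 3.3 of \cite{Burke1995}, and the paper's proof simply cites it rather than reproving it. So no separation argument needs to be reconstructed; what you anticipate as the hard step is a one-line citation once the correct definition of quasi-regularity is in place. With that repair, the remainder of your argument --- the $C^{\infty}$ sigmoid giving bounded second derivatives and hence a Lipschitz $F^{\prime}$ on bounded balls, the case split defining $t_0$, and the identification $E(\boldsymbol{\theta}^{*})=\mathbb{L}(F(\boldsymbol{\theta}^{*}))=\mathbb{L}_{\min}=E_{\min}$ --- matches the paper's proof step for step.
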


\begin{proof}
According to the assumptions of Lemma \ref{lem_lpa}, we need to verify the following four conditions. \\[-20pt]
\begin{enumerate}
    \item[(\romannumeral1)] \emph{Quasi-regularity condition}. By Proposition 3.3 in (\cite{Burke1995}), we know that any regular point of the inclusion $F(\boldsymbol{\theta})\in C_{\hspace{0.1em}\mathbb{L}}$ is also a quasi-regular point. Since $\bar{\boldsymbol{\theta}}$ is a regular point, $\bar{\boldsymbol{\theta}}$ is also a quasi-regular point of the inclusion $F(\boldsymbol{\theta})\in C_{\hspace{0.1em}\mathbb{L}}$, namely there exist $\delta>0$ and $r_0>0$ such that
    \begin{equation*}
    \Pi(\boldsymbol{\theta})\neq\emptyset ~~\text{and}~~ \text{\rm dist}(\boldsymbol{0},\Pi(\boldsymbol{\theta}))\leq \delta \text{\rm dist}(F(\boldsymbol{\theta}),C_{\hspace{0.1em}\mathbb{L}})~\text{for each}~ \boldsymbol{\theta}\in B(\bar{\boldsymbol{\theta}}, r_0),
    \end{equation*}
    where $\Pi(\boldsymbol{\theta}):=\{\Delta\boldsymbol{\theta}\in\mathbb{R}^n: F(\boldsymbol{\theta})+F^{\prime}(\boldsymbol{\theta})\Delta\boldsymbol{\theta}\in C_{\hspace{0.1em}\mathbb{L}}\}$ is the solution set of the linearized inclusion $F(\boldsymbol{\theta})+F^{\prime}(\boldsymbol{\theta})\Delta\boldsymbol{\theta}\in C_{\hspace{0.1em}\mathbb{L}}$. \\[-20pt]
    \item[(\romannumeral2)] \emph{Weak sharp minima}. In particular, we set $r_0\in(0,r)$. Naturally, $C_{\hspace{0.1em}\mathbb{L}}$ is the set of local weak sharp minima of order $\beta$ for $\mathbb{L}$ on $B(F(\bar{\boldsymbol{\theta}}),r_0)$ with constant $\alpha$ for some $\alpha>0$, due to the assumption and definition of the weak sharp minima. \\[-20pt]
    \item[(\romannumeral3)] \emph{Lipschitz continuous gradient}. Note that a differentiable function with a Lipschitz continuous gradient is second-order differentiable almost everywhere on $\Omega$. If $G$ is a second-order differentiable function, by the differential mean value theorem, it is obvious that the $K$-smoothness of $G$ is equivalent to the boundedness of $G^{\prime\prime}$, that is, $\Vert G^{\prime\prime}(\boldsymbol{\omega})\Vert\leq K$ for each $\boldsymbol{\omega}\in\Omega$. On the other hand, since $F$ defined by (\ref{composite}) is smooth on $\mathbb{R}^n$, $F^{\prime\prime}$ is continuous on $\mathbb{R}^n$. Naturally, $F^{\prime\prime}$ is bounded on the bounded subset $B(\bar{\boldsymbol{\theta}},r_0)$. Therefore, $F$ is continuously differentiable with a Lipschitz continuous gradient $F^{\prime}$ on $B(\bar{\boldsymbol{\theta}},r_0)$.\\[-20pt]
    \item[(\romannumeral4)] \emph{Large stepsize}. If $\beta=2$, we set $t_0=\frac{2\delta^2}{\alpha}$; otherwise, set $t_0=0$. 
\end{enumerate}

Hence, Lemma \ref{lem_lpa} is applicable and the conclusion follows.
\end{proof}

Furthermore, we analyze the convergence properties of Algorithm \ref{lpa} for the three common sigmoid networks. 

\begin{corollary}\label{sufficient} Let $\{\boldsymbol{\theta}_k\}$ be a sequence generated by {\rm Algorithm} {\rm\ref{lpa}}, and  \hspace{0.15em}$\bar{\boldsymbol{\theta}}\in\mathbb{R}^n$ be such that $F(\bar{\boldsymbol{\theta}})\in C_{\hspace{0.1em}\mathbb{L}}$.
If $F^{\prime}(\bar{\boldsymbol{\theta}})$ has full row rank, then there exists an $\bar{r}>0$ such that for any initial point \hspace{0.1em}$\boldsymbol{\theta}_0\in B(\bar{\boldsymbol{\theta}},\bar{r})$, we have\\[-20pt]
\begin{enumerate}
    \item[\rm(\romannumeral1)] for the sigmoid networks with the quadratic loss function, the sequence $\{\boldsymbol{\theta}_k\}$
    linearly converges to a globally optimal solution $\boldsymbol{\theta}^{*}$ and $F(\boldsymbol{\theta}^{*})=\boldsymbol{0}$, if $t$ is sufficiently large. \\[-20pt]
    \item[\rm(\romannumeral2)] for the sigmoid networks with the absolute loss function, the sequence $\{\boldsymbol{\theta}_k\}$ quadratically converges to a globally optimal solution $\boldsymbol{\theta}^{*}$ and $F(\boldsymbol{\theta}^{*})=\boldsymbol{0}$.\\[-20pt]
    \item[\rm(\romannumeral3)] for the sigmoid networks with the hinge loss function, the sequence $\{\boldsymbol{\theta}_k\}$ quadratically converges to a globally optimal solution $\boldsymbol{\theta}^{*}$ and $F(\boldsymbol{\theta}^{*})\geq\boldsymbol{1}$.
\end{enumerate}
\end{corollary}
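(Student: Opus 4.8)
The plan is to obtain all three cases as instances of Theorem \ref{local}, so the real work is to verify, for each of the three outer functions $\mathbb{L}$, the two hypotheses of that theorem that are not already assumed here: the regularity of $\bar{\boldsymbol{\theta}}$ and the weak sharp minima property of $C_{\mathbb{L}}$. First I would dispose of regularity uniformly. Since $F^{\prime}(\bar{\boldsymbol{\theta}})\in\mathbb{R}^{m\times n}$ has full row rank $m$, its transpose $F^{\prime}(\bar{\boldsymbol{\theta}})^{T}$ has full column rank, so $\text{\rm ker}(F^{\prime}(\bar{\boldsymbol{\theta}})^{T})=\{\boldsymbol{0}\}$. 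The defining intersection $\text{\rm ker}(F^{\prime}(\bar{\boldsymbol{\theta}})^{T})\cap(C_{\mathbb{L}}-F(\bar{\boldsymbol{\theta}}))^{\ominus}$ is then forced to equal $\{\boldsymbol{0}\}$ irrespective of the polar cone, so $\bar{\boldsymbol{\theta}}$ is a regular point of the inclusion $F(\boldsymbol{\theta})\in C_{\mathbb{L}}$ for every one of the three losses.

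Next I would pin down the weak-sharp-minima order $\beta$ for each $\mathbb{L}$, which is exactly what separates the three conclusions. In every case $h_{\min}=0$, and it suffices to exhibit a bound $\mathbb{L}(\boldsymbol{z})\geq\alpha\,\mathrm{dist}^{\beta}(\boldsymbol{z},C_{\mathbb{L}})$; the bounds I have in mind hold globally, hence a fortiori on any ball about $F(\bar{\boldsymbol{\theta}})$. For the quadratic loss $\mathbb{L}(\boldsymbol{z})=\frac{1}{m}\Vert\boldsymbol{z}\Vert^{2}$ we have $C_{\mathbb{L}}=\{\boldsymbol{0}\}$ and $\mathrm{dist}(\boldsymbol{z},C_{\mathbb{L}})=\Vert\boldsymbol{z}\Vert$, so the exact identity $\mathbb{L}(\boldsymbol{z})=\frac{1}{m}\mathrm{dist}^{2}(\boldsymbol{z},C_{\mathbb{L}})$ gives order $\beta=2$ with modulus $\alpha=1/m$. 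For the absolute loss $\mathbb{L}(\boldsymbol{z})=\frac{1}{m}\Vert\boldsymbol{z}\Vert_{1}$ again $C_{\mathbb{L}}=\{\boldsymbol{0}\}$, and the elementary norm inequality $\Vert\boldsymbol{z}\Vert_{1}\geq\Vert\boldsymbol{z}\Vert_{2}$ yields $\mathbb{L}(\boldsymbol{z})\geq\frac{1}{m}\mathrm{dist}(\boldsymbol{z},C_{\mathbb{L}})$, i.e.\ order $\beta=1$. For the hinge loss the only extra step is to identify $C_{\mathbb{L}}=\{\boldsymbol{z}:\boldsymbol{z}\geq\boldsymbol{1}\}$ and to compute the Euclidean projection onto this box, which acts coordinatewise as $z_i\mapsto\max(z_i,1)$ and gives $\mathrm{dist}(\boldsymbol{z},C_{\mathbb{L}})=\Vert(\boldsymbol{1}-\boldsymbol{z})_{+}\Vert_{2}$; then $\mathbb{L}(\boldsymbol{z})=\frac{1}{m}\Vert(\boldsymbol{1}-\boldsymbol{z})_{+}\Vert_{1}\geq\frac{1}{m}\Vert(\boldsymbol{1}-\boldsymbol{z})_{+}\Vert_{2}=\frac{1}{m}\mathrm{dist}(\boldsymbol{z},C_{\mathbb{L}})$ again gives order $\beta=1$.

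Finally I would feed these data into Theorem \ref{local}. With regularity and weak sharp minima in hand, the theorem supplies a radius $\bar{r}>0$ and convergence of $\{\boldsymbol{\theta}_k\}$ at rate $2/\beta$ from any $\boldsymbol{\theta}_0\in B(\bar{\boldsymbol{\theta}},\bar{r})$ to a globally optimal $\boldsymbol{\theta}^{*}$ with $F(\boldsymbol{\theta}^{*})\in C_{\mathbb{L}}$. For the quadratic loss $\beta=2$ gives rate $2/\beta=1$, i.e.\ linear convergence, and triggers the stepsize threshold $t>t_0=2\delta^{2}/\alpha$ from the proof of Theorem \ref{local}, which is precisely the ``$t$ sufficiently large'' clause; here $F(\boldsymbol{\theta}^{*})\in C_{\mathbb{L}}=\{\boldsymbol{0}\}$. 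For the absolute and hinge losses $\beta=1$ gives rate $2/\beta=2$, i.e.\ quadratic convergence, with $t_0=0$ so that every $t>0$ works; the limits satisfy $F(\boldsymbol{\theta}^{*})=\boldsymbol{0}$ and $F(\boldsymbol{\theta}^{*})\geq\boldsymbol{1}$, respectively. I expect the only genuinely nontrivial point to be the hinge-loss distance computation—justifying that the projection onto $\{\boldsymbol{z}\geq\boldsymbol{1}\}$ is the coordinatewise clamp so that $\mathrm{dist}(\boldsymbol{z},C_{\mathbb{L}})=\Vert(\boldsymbol{1}-\boldsymbol{z})_{+}\Vert_{2}$—since everything else is direct substitution into Theorem \ref{local}.
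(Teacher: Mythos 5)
Your proposal is correct and follows essentially the same route as the paper's own proof: regularity from full row rank via $\ker(F^{\prime}(\bar{\boldsymbol{\theta}})^{T})=\{\boldsymbol{0}\}$, weak sharp minima of order $2$ (quadratic) and order $1$ (absolute, hinge) with modulus $\frac{1}{m}$ via the same norm inequalities, and then an appeal to Theorem \ref{local} with $t_0=\frac{2\delta^2}{\alpha}$ for $\beta=2$ and $t_0=0$ for $\beta=1$. The only difference is that you justify the hinge-loss identity $\mathrm{dist}(\boldsymbol{z},C_{\mathbb{L}})=\Vert(\boldsymbol{1}-\boldsymbol{z})_{+}\Vert$ by explicitly computing the coordinatewise projection onto $\{\boldsymbol{z}\geq\boldsymbol{1}\}$, a detail the paper asserts without proof.
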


\begin{proof}
According to the assumptions of Theorem \ref{local}, we need to verify the following two conditions.\\[-18pt]
\begin{enumerate}
    \item[(a)] \emph{Regularity condition}. Since the system of linear equations $W\boldsymbol{y}=\boldsymbol{0}$ has only zero solution if and only if the matrix $W$ has full column rank, $F^{\prime}(\bar{\boldsymbol{\theta}})$ with full row rank is equivalent to $\text{ker}(F^{\prime}(\bar{\boldsymbol{\theta}})^T)=\{\boldsymbol{0}\}$. Then, it follows that
    $$\text{ker}(F^{\prime}(\bar{\boldsymbol{\theta}})^T) \cap (C_{\mathbb{L}}-F(\bar{\boldsymbol{\theta}}))^{\ominus} = \{\boldsymbol{0}\}.$$
    Therefore, the regularity condition is satisfied.\\[-20pt]
    \item[(b)] \emph{Weak sharp minima}. Note that $\mathbb{L}_{\rm min}=0$; $C_{\hspace{0.1em}\mathbb{L}}=\{\boldsymbol{0}\}$ for the quadratic or absolute loss functions, and $C_{\hspace{0.1em}\mathbb{L}}\geq \boldsymbol{1}$ for the hinge loss function.\\
\end{enumerate} \ \\[-46pt]

    \hspace{-0.25em}(\romannumeral1)
    \begin{minipage}[t]{0.95\linewidth}
    In the case when $\mathbb{L}(\boldsymbol{z}) = \frac{1}{m} \Vert \boldsymbol{z}\Vert^2$, $\mathbb{L}(\boldsymbol{z})=\mathbb{L}_{\rm min}+\frac{1}{m}\text{dist}^2(\boldsymbol{z},C_{\hspace{0.1em}\mathbb{L}})$ for each $\boldsymbol{z}\in\mathbb{R}^m$. By the definition of weak sharp minima, we know that $C_{\hspace{0.1em}\mathbb{L}}=\{\boldsymbol{0}\}$ is the set of weak sharp minima of order $2$ for $\mathbb{L}$ on $\mathbb{R}^m$ with modulus $\frac{1}{m}$.
    \end{minipage}

    \hspace{-0.25em}(\romannumeral2)
    \begin{minipage}[t]{0.95\linewidth}
    In the case when $\mathbb{L}(\boldsymbol{z}) = \frac{1}{m} \Vert \boldsymbol{z} \Vert_1$, $\mathbb{L}(\boldsymbol{z}) \geq \frac{1}{m}\Vert \boldsymbol{z}\Vert=\mathbb{L}_{\rm min}+\frac{1}{m}\text{dist}(\boldsymbol{z},C_{\hspace{0.1em}\mathbb{L}})$ for each $\boldsymbol{z}\in\mathbb{R}^m$. In the same way, it shows that $C_{\hspace{0.1em}\mathbb{L}}=\{\boldsymbol{0}\}$ is the set of weak sharp minima of order $1$ for $\mathbb{L}$ on $\mathbb{R}^m$ with modulus $\frac{1}{m}$.
    \end{minipage}

    \hspace{-0.45em}(\romannumeral3)
     \begin{minipage}[t]{0.95\linewidth}
     In the case when $\mathbb{L}(\boldsymbol{z})=\frac{1}{m}\Vert(\boldsymbol{1}-\boldsymbol{z})_{+}\Vert_1$, $\mathbb{L}(\boldsymbol{z})\geq\frac{1}{m}\Vert(\boldsymbol{1}-\boldsymbol{z})_{+}\Vert= \mathbb{L}_{\rm min}+\frac{1}{m}\text{dist}(\boldsymbol{z},C_{\hspace{0.1em}\mathbb{L}})$ for each $\boldsymbol{z}\in\mathbb{R}^m$, which implies that $C_{\hspace{0.1em}\mathbb{L}}\geq \boldsymbol{1}$ is the set of weak sharp minima of order $1$ for $\mathbb{L}$ on $\mathbb{R}^m$ with modulus $\frac{1}{m}$. Therefore, the local weak sharp minima is satisfied for the three common sigmoid networks.
     \end{minipage}\\[0pt]

Hence, Theorem {\rm\ref{local}} is applicable and the conclusion follows.
\end{proof}


As we have seen, the weak sharp minima is often satisfied for sigmoid networks, and its order determines the convergence rate of the algorithm. To our surprise, a first-order algorithm even has a second-order convergence rate. In the following paragraphs, we establish the global convergence of Algorithm \ref{glpa} by virtue of Lemma \ref{lem_glpa}.

\begin{theorem} \label{global} {\rm\bf (Global Convergence)}. Let $\beta\in[1,2)$ and $r>0$. Let $\{\boldsymbol{\theta}_k\}$ be a sequence generated by {\rm Algorithm} {\rm\ref{glpa}}, and $\{\boldsymbol{\theta}_k\}$ have a cluster point \hspace{0.1em}$\boldsymbol{\theta}^{*}$ such that $C_{\hspace{0.1em}\mathbb{L}}$ be the set of weak sharp minima of order $\beta$ for $\mathbb{L}$ on $B(F(\boldsymbol{\theta}^{*}),r)$. If \hspace{0.1em}$\boldsymbol{\theta}^{*}$ is a regular point of the inclusion, then $\{\boldsymbol{\theta}_k\}$ converges at a rate of $\frac{2}{\beta}$ to a globally optimal solution $\boldsymbol{\theta}^{*}$ and $F(\boldsymbol{\theta}^{*})\in C_{\hspace{0.1em}\mathbb{L}}$.
\end{theorem}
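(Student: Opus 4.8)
The plan is to reduce the statement directly to Lemma \ref{lem_glpa}, mirroring the structure of the proof of Theorem \ref{local} but using the globalized lemma in place of the local one. I would identify the general composite problem (\ref{cco}) with the sigmoid-network problem (\ref{composite}) under the correspondence $\boldsymbol{\omega}=\boldsymbol{\theta}$, $G=F$, $h=\mathbb{L}$, $b=n$, and $l=m$. Since Algorithm \ref{glpa} is precisely the GLPA applied to (\ref{composite}), the generated sequence $\{\boldsymbol{\theta}_k\}$ and its cluster point $\boldsymbol{\theta}^{*}$ play the roles of $\{\boldsymbol{\omega}_k\}$ and $\boldsymbol{\omega}^{*}$ in Lemma \ref{lem_glpa}. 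It then suffices to check the three remaining hypotheses of that lemma.

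Two of these are inherited verbatim from the assumptions. The weak sharp minima hypothesis of order $\beta$ for $\mathbb{L}$ near $F(\boldsymbol{\theta}^{*})$ is supplied by the assumption that $C_{\mathbb{L}}$ is the set of weak sharp minima of order $\beta$ on the ball $B(F(\boldsymbol{\theta}^{*}),r)$, the ball furnishing exactly the required neighborhood; and the regularity of $\boldsymbol{\theta}^{*}$ for the inclusion $F(\boldsymbol{\theta})\in C_{\mathbb{L}}$ is assumed outright. I would emphasize here that, in contrast to Theorem \ref{local}, no passage through the quasi-regularity condition (via Proposition 3.3 of \cite{Burke1995}) is needed, because Lemma \ref{lem_glpa} is stated directly in terms of the regularity condition; and since $\beta\in[1,2)$ is assumed, the large-stepsize requirement that appeared in condition (iv) of Theorem \ref{local} is likewise absent.

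The only hypothesis requiring a short argument is the Lipschitz continuity of $F^{\prime}$ near $\boldsymbol{\theta}^{*}$, which I would establish exactly as in part (iii) of the proof of Theorem \ref{local}. Because $F$ defined by (\ref{composite}) is smooth on all of $\mathbb{R}^n$, its second derivative $F^{\prime\prime}$ is continuous and hence bounded on any bounded neighborhood of $\boldsymbol{\theta}^{*}$; the differential mean value theorem then yields a finite Lipschitz modulus for $F^{\prime}$ on that neighborhood, giving the continuously-differentiable-with-Lipschitz-gradient hypothesis.

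With all hypotheses verified and $\beta\in[1,2)$ by assumption, Lemma \ref{lem_glpa} applies and delivers $F(\boldsymbol{\theta}^{*})\in C_{\mathbb{L}}$ together with convergence of $\{\boldsymbol{\theta}_k\}$ to $\boldsymbol{\theta}^{*}$ at rate $\tfrac{2}{\beta}$. As recorded in the remark following Lemma \ref{lem_glpa}, $F(\boldsymbol{\theta}^{*})\in C_{\mathbb{L}}$ is equivalent to $\mathbb{L}(F(\boldsymbol{\theta}^{*}))=\mathbb{L}_{\min}=(\mathbb{L}\circ F)_{\min}$, so $\boldsymbol{\theta}^{*}$ is a globally optimal solution, which completes the proof. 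I do not expect a genuine obstacle: the work is essentially a dictionary translation plus the smoothness-to-Lipschitz step, and the main thing to get right is confirming that the cluster-point hypothesis and the ball $B(F(\boldsymbol{\theta}^{*}),r)$ match the ``near $G(\boldsymbol{\omega}^{*})$'' phrasing of Lemma \ref{lem_glpa}.
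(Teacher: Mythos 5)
Your proposal is correct and follows essentially the same route as the paper's proof: both reduce Theorem \ref{global} to Lemma \ref{lem_glpa} by verifying the regularity condition, the weak sharp minima (both inherited directly from the hypotheses), and the Lipschitz continuity of $F^{\prime}$ via the boundedness-of-$F^{\prime\prime}$ argument from part (iii) of the proof of Theorem \ref{local}. Your additional remarks --- that no quasi-regularity detour or large-stepsize condition is needed, and that $F(\boldsymbol{\theta}^{*})\in C_{\hspace{0.1em}\mathbb{L}}$ yields global optimality --- are accurate and consistent with the paper.
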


\begin{proof}
According to the assumptions of Lemma \ref{lem_glpa}, we need to verify the following three conditions. \\[-18pt]
\begin{enumerate}
    \item[(\romannumeral1)] \emph{Regularity condition}. Since the cluster point $\boldsymbol{\theta}^{*}$ is a regular point of the inclusion $F(\boldsymbol{\theta})\in C_{\hspace{0.1em}\mathbb{L}}$, the regularity condition is satisfied.\\[-20pt]
    \item[(\romannumeral2)] \emph{Weak sharp minima}. Since $C_{\hspace{0.1em}\mathbb{L}}$ is the set of weak sharp minima of order $\beta$ for $\mathbb{L}$ on $B(F(\boldsymbol{\theta}^{*}),r)$ for some $r>0$ and $\beta\in[1,2)$, the local weak sharp minima is satisfied. \\[-20pt]
    \item[(\romannumeral3)] \emph{Lipschitz continuous gradient}. By (\romannumeral3) in the proof of Theorem \ref{local}, we know that $F$ is continuously differentiable with a Lipschitz continuous gradient $F^{\prime}$ on $B(\boldsymbol{\theta}^{*},r)$.\\[-20pt]
\end{enumerate}

Hence, Lemma \ref{lem_glpa} is applicable and the conclusion follows.
\end{proof}

We can see that Algorithm \ref{glpa} has the same conclusion and convergence rate as Algorithm \ref{lpa} under the same assumptions. Next, we show the global convergence of Algorithm \ref{glpa} for two non-convex and non-smooth sigmoid networks.

\begin{corollary} \label{suffg}
Let $\{\boldsymbol{\theta}_k\}$ be a sequence generated by {\rm Algorithm} {\rm\ref{glpa}} for the sigmoid networks with absolute or hinge loss functions, and $\{\boldsymbol{\theta}_k\}$ have a cluster point \hspace{0.1em}$\boldsymbol{\theta}^{*}$. If $F^{\prime}(\boldsymbol{\theta}^{*})$ has full row rank, then $\{\boldsymbol{\theta}_k\}$ quadratically converges to a globally optimal solution $\boldsymbol{\theta}^{*}$ and $F(\boldsymbol{\theta}^{*})\in C_{\hspace{0.1em}\mathbb{L}}$. 
\end{corollary}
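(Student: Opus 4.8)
The plan is to apply Theorem \ref{global} with $\beta = 1$, mirroring the structure of the proof of Corollary \ref{sufficient} but invoking the global-convergence result in place of the local one. Accordingly I would verify at the cluster point $\boldsymbol{\theta}^{*}$ the three hypotheses of Theorem \ref{global}: the regularity condition, the weak sharp minima of order $\beta\in[1,2)$, and the Lipschitz continuous gradient.

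First I would establish the regularity condition exactly as in part (a) of the proof of Corollary \ref{sufficient}. Since $F^{\prime}(\boldsymbol{\theta}^{*})$ has full row rank, its transpose has full column rank, so the system $F^{\prime}(\boldsymbol{\theta}^{*})^{T}\boldsymbol{y}=\boldsymbol{0}$ admits only the trivial solution, i.e. $\text{\rm ker}(F^{\prime}(\boldsymbol{\theta}^{*})^{T})=\{\boldsymbol{0}\}$. Consequently the intersection $\text{\rm ker}(F^{\prime}(\boldsymbol{\theta}^{*})^{T})\cap(C_{\mathbb{L}}-F(\boldsymbol{\theta}^{*}))^{\ominus}$ collapses to $\{\boldsymbol{0}\}$ regardless of the second set, so $\boldsymbol{\theta}^{*}$ is a regular point of the inclusion $F(\boldsymbol{\theta})\in C_{\mathbb{L}}$. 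Note that this does not presuppose $F(\boldsymbol{\theta}^{*})\in C_{\mathbb{L}}$, which is recovered only as part of the conclusion of Theorem \ref{global}.

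Next I would record the weak sharp minima. For both the absolute and the hinge loss, parts (\romannumeral2) and (\romannumeral3) of the proof of Corollary \ref{sufficient} already show that $C_{\mathbb{L}}$ is the set of weak sharp minima of order $\beta=1$ for $\mathbb{L}$ on all of $\mathbb{R}^{m}$ with modulus $\frac{1}{m}$; in particular this holds on the ball $B(F(\boldsymbol{\theta}^{*}),r)$ for every $r>0$. Crucially, $\beta=1$ lies in $[1,2)$, which is precisely the range demanded by Theorem \ref{global} (and by Lemma \ref{lem_glpa}); this is exactly why the quadratic loss, whose weak sharp minima has order $\beta=2$, is excluded from the statement. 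The Lipschitz continuous gradient on $B(\boldsymbol{\theta}^{*},r)$ is inherited verbatim from part (\romannumeral3) of the proof of Theorem \ref{local}, since $F$ is smooth with continuous, hence locally bounded, second derivative.

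With all three hypotheses verified, Theorem \ref{global} applies and yields that $\{\boldsymbol{\theta}_k\}$ converges to $\boldsymbol{\theta}^{*}$ at the rate $\frac{2}{\beta}=2$ with $F(\boldsymbol{\theta}^{*})\in C_{\mathbb{L}}$, which is the claimed quadratic convergence to a globally optimal solution. I do not expect a genuine obstacle, since the corollary is essentially a direct specialization of Theorem \ref{global}; the only point demanding care is the bookkeeping on $\beta$—confirming that the order-one weak sharp minima of the two non-smooth losses keeps $\beta$ \emph{strictly} below $2$, which is exactly the condition that separates the global result from the local one and that forces the restriction to the absolute and hinge cases.
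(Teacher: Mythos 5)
Your proposal is correct and follows essentially the same route as the paper: it verifies the regularity condition from the full row rank (reusing part (a) of Corollary \ref{sufficient}'s proof), invokes the order-$1$ weak sharp minima for the absolute and hinge losses (parts (\romannumeral2) and (\romannumeral3) of that proof), and then applies Theorem \ref{global} with $\beta=1$ to get the quadratic rate. The only cosmetic difference is that you re-verify the Lipschitz-gradient condition explicitly, which the paper leaves to Theorem \ref{global} itself; your added remark on why $\beta=1\in[1,2)$ excludes the quadratic loss is a correct and sensible clarification.
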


\begin{proof}
According to the assumptions of Theorem \ref{global}, we need to verify the following two conditions. \\[-18pt]
\begin{enumerate}
    \item[(a)] \emph{Regularity condition}. By (a) in the proof of Corollary \ref{sufficient},  the full row rank of $F^{\prime}(\boldsymbol{\theta}^{*})$ implies that the cluster point $\boldsymbol{\theta}^{*}$ is a regular point of the inclusion. Therefore, the regularity condition is satisfied.\\[-20pt]
    \item[(b)] \emph{Weak sharp minima}. By (b) in the proof of Corollary \ref{sufficient}, we know that $C_{\hspace{0.1em}\mathbb{L}}$ is the set of weak sharp minima of order 1 for $\mathbb{L}$ on $\mathbb{R}^m$ with modulus $\frac{1}{m}$. Therefore, the local weak sharp minima is satisfied for the two sigmoid networks. \\[-20pt]
\end{enumerate}

Hence, Theorem \ref{global} is applicable and the conclusion follows.
\end{proof}

Note that $F^{\prime}(\bar{\boldsymbol{\theta}})$ with full row rank, namely $\text{\rm rank}(F^{\prime}(\bar{\boldsymbol{\theta}}))=m$, where $m$ is the amount of training data, is the sufficient condition of the regularity condition; and it is also the necessary condition when \hspace{0.05em}$C_{\hspace{0.1em}\mathbb{L}}$ is a singleton set and $F^{\prime}(\bar{\boldsymbol{\theta}})\in C_{\hspace{0.1em}\mathbb{L}}$. Hence the convergence results can be directly related to the amount of training data. Next, we show the following convergence property of the LPA-type algorithms in a finite number of iterations.



\begin{theorem} \label{suffr} {\rm \bf (Sufficient Condition)}.
If the {\rm LPA-type} algorithm stops at the $k$th iteration with $\text{\rm rank}(G^{\prime}(\boldsymbol{\omega}_k))=l$, then $\boldsymbol{\omega}_k$ is a globally optimal solution to the convex composite optimization {\rm(\ref{cco})} and $G(\boldsymbol{\omega}_k)\in C_{h}$.
\end{theorem}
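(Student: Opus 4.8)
The plan is to read off the conclusion directly from the stopping rule of the LPA-type algorithm together with convex subdifferential calculus. The algorithm halts at iteration $k$ precisely when the search direction vanishes, $\Delta\boldsymbol{\omega}_k=\boldsymbol{0}$. Because the subproblem objective
\begin{equation*}
\phi(\Delta\boldsymbol{\omega}):=h\big(G(\boldsymbol{\omega}_k)+G^{\prime}(\boldsymbol{\omega}_k)\Delta\boldsymbol{\omega}\big)+\tfrac{1}{2t}\Vert\Delta\boldsymbol{\omega}\Vert^2
\end{equation*}
is convex (a convex $h$ composed with an affine map, plus a strictly convex quadratic), the condition $\Delta\boldsymbol{\omega}_k=\boldsymbol{0}$ is \emph{equivalent} to $\boldsymbol{0}$ being a global minimizer of $\phi$. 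First I would write the first-order optimality condition at $\Delta\boldsymbol{\omega}=\boldsymbol{0}$ in subdifferential form. The quadratic term contributes $\tfrac{1}{t}\Delta\boldsymbol{\omega}$, which vanishes at the origin, so
\begin{equation*}
\boldsymbol{0}\in\partial\phi(\boldsymbol{0})=G^{\prime}(\boldsymbol{\omega}_k)^{T}\,\partial h\big(G(\boldsymbol{\omega}_k)\big).
\end{equation*}
This guarantees the existence of a subgradient $\boldsymbol{g}\in\partial h(G(\boldsymbol{\omega}_k))$ with $G^{\prime}(\boldsymbol{\omega}_k)^{T}\boldsymbol{g}=\boldsymbol{0}$, i.e. $\boldsymbol{g}\in\text{\rm ker}(G^{\prime}(\boldsymbol{\omega}_k)^{T})$.

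Next I would invoke the rank hypothesis exactly as in step (a) of the proof of Corollary \ref{sufficient}. Since $G^{\prime}(\boldsymbol{\omega}_k)\in\mathbb{R}^{l\times b}$ has full row rank $l$, its transpose has full column rank and therefore trivial nullspace, $\text{\rm ker}(G^{\prime}(\boldsymbol{\omega}_k)^{T})=\{\boldsymbol{0}\}$. The only admissible subgradient is thus $\boldsymbol{g}=\boldsymbol{0}$, which gives $\boldsymbol{0}\in\partial h(G(\boldsymbol{\omega}_k))$. By Fermat's rule for convex functions, $\boldsymbol{0}\in\partial h(\boldsymbol{z})$ characterizes the global minimizers of $h$, so $G(\boldsymbol{\omega}_k)\in C_{h}$ and $h(G(\boldsymbol{\omega}_k))=h_{\text{min}}$. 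Global optimality of $\boldsymbol{\omega}_k$ for \eqref{cco} is then immediate: for every $\boldsymbol{\omega}\in\mathbb{R}^b$ one has $h(G(\boldsymbol{\omega}))\geq h_{\text{min}}=h(G(\boldsymbol{\omega}_k))$, so $\boldsymbol{\omega}_k$ attains the minimum of $h\circ G$.

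The one delicate point, and the main obstacle, is justifying the subdifferential chain rule used in the first display, namely $\partial_{\Delta\boldsymbol{\omega}}\big[h(G(\boldsymbol{\omega}_k)+G^{\prime}(\boldsymbol{\omega}_k)\Delta\boldsymbol{\omega})\big]=G^{\prime}(\boldsymbol{\omega}_k)^{T}\partial h(G(\boldsymbol{\omega}_k)+G^{\prime}(\boldsymbol{\omega}_k)\Delta\boldsymbol{\omega})$, since $h$ need not be smooth (the $\ell_1$ and hinge cases). I would stress that this identity holds \emph{without} any constraint qualification because the inner map $\Delta\boldsymbol{\omega}\mapsto G(\boldsymbol{\omega}_k)+G^{\prime}(\boldsymbol{\omega}_k)\Delta\boldsymbol{\omega}$ is affine, so the standard affine-composition rule of convex subdifferential calculus applies verbatim. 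As a consistency check, for the smooth quadratic loss the condition collapses to $\nabla E(\boldsymbol{\omega}_k)=\tfrac{2}{m}G^{\prime}(\boldsymbol{\omega}_k)^{T}G(\boldsymbol{\omega}_k)=\boldsymbol{0}$, and full row rank forces $G(\boldsymbol{\omega}_k)=\boldsymbol{0}$, recovering exactly the first-order discussion following the Levenberg--Marquardt reduction in Section \ref{part3}.
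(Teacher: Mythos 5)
Your proposal is correct and follows essentially the same route as the paper's own proof: write the (subdifferential) optimality condition for the convex subproblem, substitute the stopping criterion $\Delta\boldsymbol{\omega}_k=\boldsymbol{0}$, use the full-rank hypothesis to force the subgradient in $\text{\rm ker}(G^{\prime}(\boldsymbol{\omega}_k)^{T})$ to be zero, and conclude $\boldsymbol{0}\in\partial h(G(\boldsymbol{\omega}_k))$ hence $G(\boldsymbol{\omega}_k)\in C_{h}$. Your additional remarks---the explicit kernel argument, the justification of the affine-composition subdifferential rule, and the final deduction that $\boldsymbol{\omega}_k$ globally minimizes $h\circ G$---merely make explicit steps the paper leaves implicit.
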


\begin{proof}
     Since the subproblem of the LPA-type algorithms is an unconstrained convex optimization problem, its necessary and sufficient optimality conditions imply that
    \begin{equation*}
        \boldsymbol{0}\in G^{\prime}(\boldsymbol{\omega}_k)^T \partial h(G(\boldsymbol{\omega}_k)+G^{\prime}(\boldsymbol{\omega}_k)\Delta\boldsymbol{\omega}_k)+\frac{1}{t}\Delta\boldsymbol{\omega}_k ~\text{for each}~ k,
    \end{equation*}
    where $\partial h(\boldsymbol{z})$ is the subdifferential of the convex function $h(\boldsymbol{z})$. The stopping criterion $\Delta\boldsymbol{\omega}_k=\boldsymbol{0}$ of the algorithms shows that
    \begin{equation*}
        \boldsymbol{0}\in  G^{\prime}(\boldsymbol{\omega}_k)^T \partial h(G(\boldsymbol{\omega}_k)).
    \end{equation*}
    By $\text{rank}(G^{\prime}(\boldsymbol{\omega}_k))=l$, equivalently, the full column rank of $G^{\prime}(\boldsymbol{\omega}_k)^T$, it follows that
    \begin{equation*}
        \boldsymbol{0}\in\partial h(G(\boldsymbol{\omega}_k)).
    \end{equation*}
    By the necessary and sufficient optimality conditions of the convex optimization, it shows that $G(\boldsymbol{\omega}_k)$ is a globally optimal solution to $h$, equivalently, $G(\boldsymbol{\omega}_k)\in C_{h}$. Hence the proof is complete. 
\end{proof} 

  Theorem \ref{suffr} also shows that $\text{rank}(F^{\prime}(\boldsymbol{\theta}_k))=m$ is the first-order sufficient condition of sigmoid networks when the LPA-type algorithm stops at the $k$th iteration. It is no surprise that there is a unified conclusion on the non-convex and possibly non-smooth sigmoid networks, thanks to the unified composite optimization framework and the convex subproblem.



 We have seen that the full row rank is a critical condition for the convergence analysis of sigmoid networks. This condition is of great theoretical and applied significance, especially since it can provide a general guide for setting the network size. In order to guarantee the reliability of the algorithm, we can ensure that $F^{\prime}(\bar{\boldsymbol{\theta}})\in\mathbb{R}^{m\times n}$ is of full row rank, which implies that $n=(d+2)q+1\geq m$, where $d$ is the dimension of the input, and $q$ is the number of hidden neurons. So we have the following corollary.

 \begin{corollary} \label{net_size}
 If {\rm$\hspace{0.1em}\text{rank}(F^{\prime}(\bar{\boldsymbol{\theta}}))=m$},
 then we have a lower bound on the network size given by
\begin{equation}\label{size}
    q\geq\left\lceil\frac{m-1}{d+2}\right\rceil.
\end{equation}
 \end{corollary}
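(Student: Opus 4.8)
The plan is to reduce the corollary to an elementary rank inequality combined with a parameter count, since the statement is essentially immediate once the dimensions are tracked carefully. The essential observation is that a matrix $F^{\prime}(\bar{\boldsymbol{\theta}})\in\mathbb{R}^{m\times n}$ has rank at most $\min(m,n)$, so the full row rank hypothesis $\text{\rm rank}(F^{\prime}(\bar{\boldsymbol{\theta}}))=m$ can hold only if the matrix has at least as many columns as rows, forcing $m\leq n$. This is the single ingredient driving the whole argument.

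First I would compute the dimension $n$ of the parameter vector $\boldsymbol{\theta}=(\boldsymbol{w},\boldsymbol{v},\boldsymbol{u},w_0)^{T}$ by counting coordinates block by block. The output weights $\boldsymbol{w}=(w_1,\dots,w_q)$ contribute $q$ entries, the input weights $\boldsymbol{v}=(\boldsymbol{v}_1,\dots,\boldsymbol{v}_q)$ with each $\boldsymbol{v}_i\in\mathbb{R}^d$ contribute $dq$ entries, the biases $\boldsymbol{u}=(u_1,\dots,u_q)$ contribute $q$ entries, and the final bias $w_0$ contributes $1$ entry. Summing gives $n=q+dq+q+1=(d+2)q+1$, which is exactly the relation already asserted in the paragraph preceding the corollary.

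Next I would invoke the rank bound to obtain the chain $m\leq n=(d+2)q+1$, rearrange to $m-1\leq(d+2)q$, and divide by the positive integer $d+2$ to get $q\geq\frac{m-1}{d+2}$. The concluding step uses that $q$, being the number of hidden neurons, is a nonnegative integer, so the lower bound lifts to its ceiling and yields $q\geq\left\lceil\frac{m-1}{d+2}\right\rceil$, which is (\ref{size}).

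As for difficulty, there is frankly no serious obstacle: the argument is a two-line consequence of the definition of full row rank and the parameter count, with the only points requiring care being that the count $n=(d+2)q+1$ is correct (in particular that the two bias blocks $\boldsymbol{u}$ and $w_0$ are both included) and that the integrality of $q$ justifies the final ceiling. I would therefore keep the exposition short and present it essentially as a direct computation rather than a structured proof.
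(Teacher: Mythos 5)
Your proposal is correct and matches the paper's own (implicit) argument: the paper derives the corollary in the preceding paragraph precisely from full row rank forcing $n=(d+2)q+1\geq m$, which rearranges to the stated bound with the ceiling justified by the integrality of $q$. Your only addition is spelling out the block-by-block parameter count and the rank inequality explicitly, which the paper leaves tacit.
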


Clearly, the lower bound on the network size is directly proportional to the amount of training data and inversely proportional to the dimension of the input. That is, the lower bound on the network size is adapted to the problem size, so we call this lower bound the ``adaptive network size". Moreover, each row of the Jacobian matrix $F^{\prime}(\boldsymbol{\theta})$ is the gradient of the fitting function $f(\boldsymbol{x};\boldsymbol{\theta})$ at the corresponding data point. In a general sense, as the number of hidden neurons increases, the information contained in the gradient increases. As a result, the rank of the Jacobian matrix will also increase or be equal to $m$. Thus, the full row rank of $F^{\prime}(\bar{\boldsymbol{\theta}})$ can be satisfied in a theoretical sense by choosing the network size sufficiently large. In conclusion, the LPA-type algorithms are almost always reliable. 


\section{Numerical Experiment}
\label{part5}

Sigmoid networks are often used to solve regression and classification tasks, so we shall use our algorithms for both tasks. We train the sigmoid networks on the training dataset and demonstrate the performance on the test dataset. Note that we will use the adaptive network size, namely the lower bound on the network size given by Corollary \ref{net_size}, to build the sigmoid networks, which is sufficient to solve problems effectively.

{\bf 5.1 Regression on Scattered Data}. Franke's function is a standard test function for 2D scattered data fitting of the form
\begin{align*}
    g(x_1,x_2) = & \frac{3}{4}e^{-1/4((9x_1-2)^2+(9x_2-2)^2)} +\frac{3}{4}e^{-(1/49)(9x_1+1)^2-(1/10)(9x_2+1)^2}\\
    & +\frac{1}{2}e^{-1/4((9x_1-7)^2+(9x_2-3)^2)} -\frac{1}{5}e^{-(9x_1-4)^2-(9x_2-7)^2},
\end{align*} 
and its graph in the unit square in $\mathbb{R}^2$ is shown on the left of Figure \ref{Haltonp}. One can see that Franke's function is a complex function with two Gaussian peaks and a small trough. We generate 289 training data points and 121 test data points using the Halton sequence. The points are uniformly distributed in the unit square in $\mathbb{R}^2$, and the result is shown on the right of Figure \ref{Haltonp}.

Considering the observational errors, we also add small white Gaussian noise to the training data to reflect the real case, that is, $y_i=g(x^1_i,x^2_i) + |\xi_i|, ~\text{and}~ \xi_i\sim N(0,\tilde{\sigma}^2)$, where $N(0,\tilde{\sigma}^2)$ is a Gaussian distribution with a mean of $0$ and a standard deviation of $\tilde{\sigma}$. All numerical experiments are implemented in Python 3.9. We generate the positive Gaussian noise using $\frac{1}{\sqrt{2\pi}\tilde{\sigma}}\cdot\text{uniform}(0,1)$.
The performance measure we choose for the regression task is the root mean squared error (RMS-error):
	\begin{equation*}
		\text{RMS-error} = \frac{1}{\sqrt{M}} \left( \sum_{i=1}^{M} (\tilde{y}_i-y_i)^2 \right)^{\frac{1}{2}},
	\end{equation*}
 where $\tilde{y}_i$ is the predicted value and $y_i$ is the actual value.

\begin{figure}[H]
\flushleft
\subfigure{
\hspace{0.5em}\includegraphics[width=0.55\linewidth]{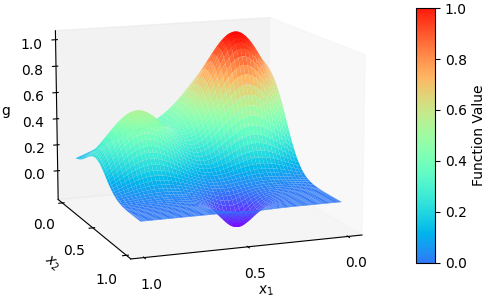}
}
\subfigure{
\hspace{1.5em}\includegraphics[width=0.33\linewidth]{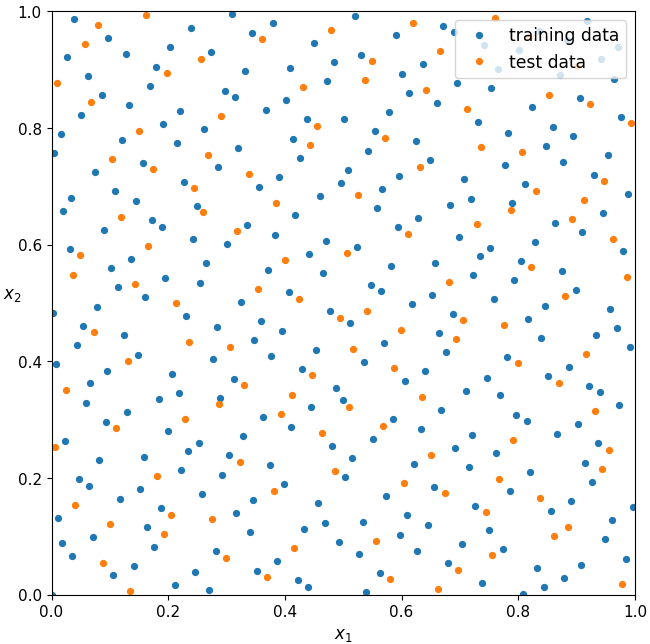}}
\caption{The graph of Franke's test function (left) and a set of 289 training data points and 121 test data points in the unit square in $\mathbb{R}^2$ (right).}
\label{Haltonp}
\vspace{-1.5em}
\end{figure}

When implementing the LPA-type algorithms for the sigmoid networks with a {\bf quadratic loss function}, we set $\tilde{\sigma}=100$, $\boldsymbol{\theta}_0=\boldsymbol{0}$, and the stopping criterion as $\Vert\Delta\boldsymbol{\theta}_k\Vert<$1e-2. For the inequality (\ref{rule}) in Algorithm \ref{glpa},
 we set $\tau=0.5$, $c=1$e-3, and the maximum number of iterations for the backtracking line-search as $10$ (indeed, one iteration is enough in most cases, that is, $\eta_k=1$ is often used). According to (\ref{size}),  we can set $q\geq 72$ to guarantee the reliability of the algorithms. For the case when $q=72$ and $t=$1e5, the performance of the algorithms is shown in Table \ref{tab_qua} and Figure \ref{L2_loss}. \\[-18pt]
 \begin{table}[H]
        \vspace{-5pt}
        \caption{The performance of regression on Franke's function (using quadratic loss).}
        \label{tab_qua}
        \vspace{5pt}
        \centering
        \begin{tabular}{ccccc}
        \hline
         & & & & \\[-12pt]
        & \multicolumn{2}{c}{LPA} & \multicolumn{2}{c}{GLPA} \\[2pt]
        & RMS-error & Max-error & RMS-error & Max-error \\[2pt]
        \hline \\[-10pt]
        No noise & 2.9525e-3 & 1.4736e-2 & 2.7790e-3 & 1.1547e-2 \\[2pt]
        Gaussian noise & 3.4364e-3 & 1.2678e-2 & 3.7613e-3 & 1.5765e-2 \\[2pt]
        \hline
        \end{tabular}
        \vspace{-1.2em}
    \end{table}
\begin{figure}[H]
\centering
\subfigure{
\includegraphics[width=0.45\linewidth]{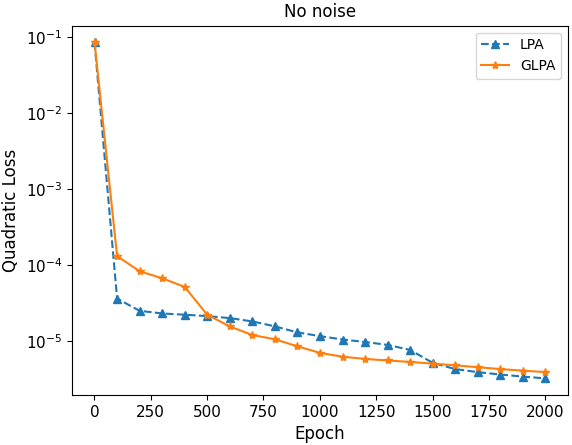}
}
\subfigure{
\includegraphics[width=0.45\linewidth]{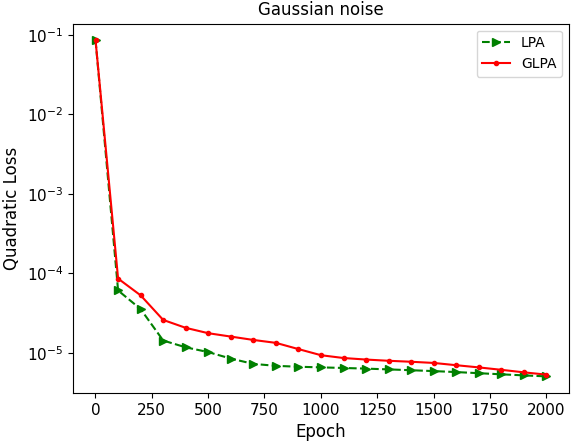}}
\caption{The variation of the quadratic loss during training. The training loss of the four experiments: (\romannumeral1) No noise: 3.1935e-6 and 5.0306e-6; (\romannumeral2)
Gaussian noise: 3.8656e-6 and 5.2940e-6.}
\label{L2_loss}
\label{influence}
\vspace{-1.2em}
\end{figure} 

    As we can see, the LPA-type algorithms solve the regression tasks well, and they are robust even when the data is perturbed by the noise with a mean of 2.0094e-3 and a maximum of 3.9894e-3. The results show that the training loss is less than 5.2940e-6 for all test cases. In other words, our algorithms can obtain an ideal solution for this task. We find that the monotonic decrease of the objective function occurs at almost every iteration of the LPA. It is almost a descent algorithm. Through multiple experiments, we also find that the performance of the LPA depends on the choice of the initial point, but the GLPA is not affected by this. Thus, we conjecture that the GLPA for sigmoid networks with the quadratic loss function can converge globally under certain conditions. This will be explored in our future work.


    Indeed, the LPA-type algorithms using small-scale networks can solve the problem as well. The illustration is shown on the left of Figure \ref{influence}. Moreover, the performance of the algorithms is also affected by the stepsize of the subproblem. This is shown on the right of Figure \ref{influence}.

\begin{figure}[H]
\flushleft
\subfigure{
\hspace{0em}\includegraphics[width=0.45\linewidth]{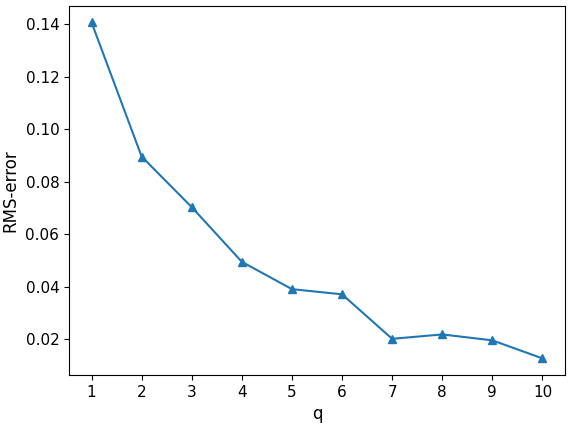}
}
\subfigure{
\hspace{1.5em}\includegraphics[width=0.45\linewidth]{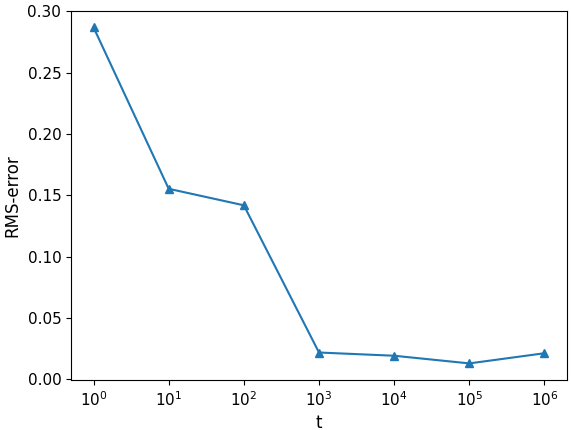}} \\[-7pt]
\caption{Execute Algorithm \ref{glpa} by varying the number $q$ of hidden neurons when $t=$1e5 (left) and the stepsize $t$ of the subproblem when $q=10$ (right).} 
\label{influence}
\end{figure} \ \\[-45pt]  

Corollary \ref{suffg} shows that Algorithm \ref{glpa} using absolute or hinge loss functions can converge globally. For simplicity, the rest of this section is devoted to demonstrating the performance of Algorithm \ref{glpa}. When implementing the GLPA for the sigmoid networks with an {\bf absolute loss function}, we still use the same parameter values as in the previous experiments. For Algorithm \ref{admm}, we set $\epsilon=\rho=$1e-2, $\Delta\boldsymbol{\theta}^0=\boldsymbol{0}$, $\boldsymbol{\lambda}^0=\boldsymbol{0}$, and the maximum number of ADMM iterations as 20. For the case when $q=72$ and $t=$1e5, the performance of the algorithm is shown
in Table \ref{tab2} and Figure \ref{L1_loss}.\\[-18pt]

\begin{table}[H]
        \vspace{-5pt}
        \caption{The performance of regression on Franke's function (using absolute loss).}
        \label{tab2}
        \vspace{5pt}
        \centering
        \hspace{0.5em}\begin{tabular}{ccc}
        \hline
        & & \\[-12pt]
         & \multicolumn{2}{c}{GLPA} \\[2pt]
         & RMS-error & max-error \\[2pt]
        \hline \\[-10pt]
        No noise & 2.2093e-4 & 8.4516e-4 \\[2pt]
        Gaussian noise & 8.4138e-4 & 4.3988e-3 \\[2pt]
        \hline
        \end{tabular}
    \end{table} \ \\[-39pt]

    \begin{figure}[H]
        \centering
        \hspace{-1em}\includegraphics[width=0.57\linewidth]{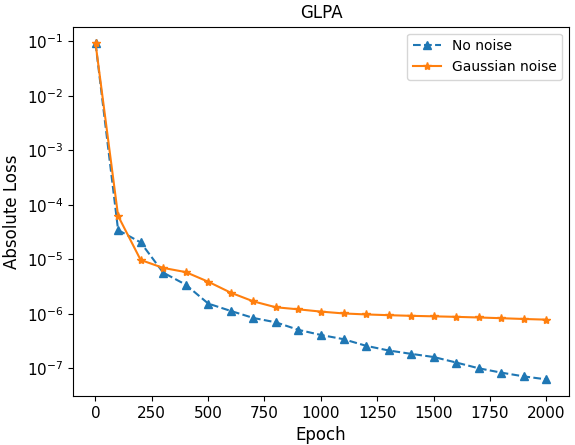} \\[-7pt]
        \caption{The variation of the absolute loss during training. The training loss in both experiments is 6.2393e-8 and 7.7930e-7.}
        \label{L1_loss}
        \vspace{-1.5em}
    \end{figure}

    The training loss in both experiments is less than 7.7930e-7, which shows that the GLPA obtains a better solution for sigmoid networks. Obviously, this result is more in line with the actual needs of regression tasks.

{\bf 5.2 Classification on Handwritten Digits}. The digits dataset from scikit-learn contains 1797 samples, each with 64 elements corresponding to an image of 8$\times$8 pixels, and with target attribute 0, 1, $\dots$, 9. Some of the samples are shown in Figure \ref{digits_30}. \\[-25pt]

    \begin{figure}[H]
        \centering
    \includegraphics[width=0.66\linewidth]{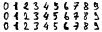}
        \caption{The first 30 samples of the digits dataset from scikit-learn.}
        \label{digits_30} \ \\[-40pt]
    \end{figure}

    We create four binary classification tasks, each to classify two digits: 0 and 1; 2 and 5; 3 and 7; 6 and 9. For each task, we take 70\% of the selected samples as the training data and the rest as the test data. Here we run four algorithms on these tasks, including the GLPA and three other popular and practical tools in the machine learning community, SGDM, RMSProp and Adam. We also use the same parameter settings for the GLPA as the previous experiments. The only difference is that we set $q=4$ by Corollary \ref{net_size} and the number of ADMM iterations does not exceed 10. For the other algorithms, implemented with PyTorch, we set the learning rate as 1e-3, the momentum as 0.9, and the number of iterations as 1000. For the case when $q=4$, the running results of the four algorithms are shown in Table \ref{hinge_e} and Figure \ref{hinge_loss}.

    Three observations are indicated by the running results: (\romannumeral1) The small training loss shows that the GLPA can obtain excellent solutions to classification problems, and the training loss of the GLPA is generally smaller than the other algorithms. (\romannumeral2) The GLPA has a much smaller number of iterations, thanks to its quadratic convergence rate in this case. It is striking that a first-order algorithm (GLPA) even has a second-order convergence rate. (\romannumeral3) The adaptive network size given by Corollary \ref{net_size} is sufficient to construct an ideal sigmoid network that solves the problem effectively. Hence Corollary \ref{net_size} does provide a good guide for setting the size of sigmoid networks. \\[-30pt]

 \begin{table}[H]
        \vspace{-5pt}
        \caption{The performance of classification on handwritten digit (using hinge loss).}
        \label{hinge_e}
        \vspace{5pt}
        \centering
        \begin{tabular}{ccccc}
        \hline
         & & & & \\[-12pt]
       Classified & \multicolumn{2}{c}{GLPA} & \multicolumn{2}{c}{SGDM (RMSProp, Adam)} \\[2pt]
       Digits & Training errors & Test errors & Training errors & Test errors \\[2pt]
        \hline \\[-10pt]
        0 - 1 & 0 / 252 & 0 / 108 &  0 / 252 &  0 / 108 \\[2pt]
        2 - 5 & 0 / 251 & 0 / 108 &  0 / 251 &  0 / 108  \\[2pt]
        3 - 7 & 0 / 253 & 0 / 109 &  0 / 253 &  0 / 109   \\[2pt]
        6 - 9 & 0 / 252 & 1 / 109 &  0 / 252 &  1 / 109   \\[2pt]
        \hline
        \end{tabular}
    \end{table}

\begin{figure}[H]
\flushleft
\subfigure{
\hspace{0em}\includegraphics[width=0.45\linewidth]{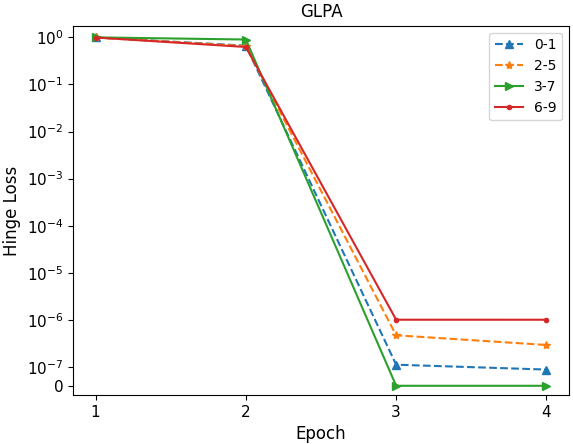}
}
\subfigure{
\hspace{1.5em}\includegraphics[width=0.45\linewidth]{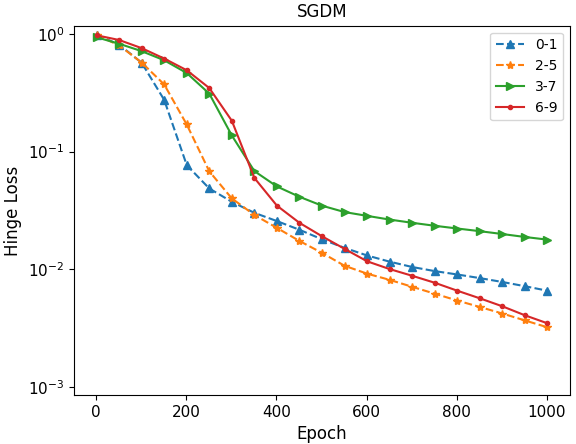}
} \vspace{0.1pt}
\subfigure{
\hspace{0em}\includegraphics[width=0.45\linewidth]{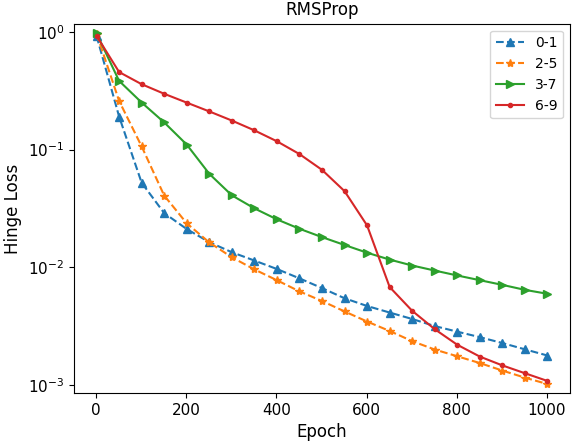}
}
\subfigure{
\hspace{1.5em}\includegraphics[width=0.45\linewidth]{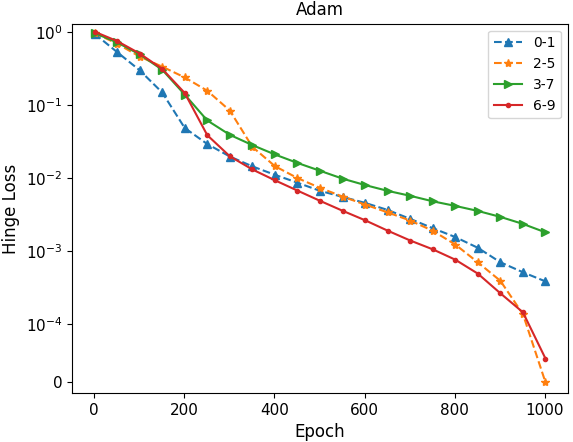}
}
\caption{The variation of the hinge loss during training. The training loss of the four binary classification tasks: (\romannumeral1) GLPA: 8.8007e-8, 2.9285e-7, 0.0 and 1.0065e-6; (\romannumeral2) SGDM: 6.5701e-3, 3.2118e-3, 1.7844e-2 and 3.4731e-3; (\romannumeral3) RMSProp: 1.7785e-3, 1.0178e-3, 5.9491e-3 and 1.0813e-3; (\romannumeral4) Adam: 3.8339e-4, 0.0, 1.8042e-3 and 3.3567e-5.} 
\label{hinge_loss}
\vspace{-1.2em}
\end{figure}

The essence of Corollary \ref{net_size} is to guarantee that the number of parameters in neural networks is not smaller than the amount of training data, and that a sufficient number of parameters ensure the feasibility of the networks. In our view, it is as if the information of a data point could be extracted by a single parameter in the model. Inspired by this, we think it can also serve as a general guide for setting the size of neural networks. It is well known that how to set the number of hidden neurons in neural networks is still an open problem, and it is usually adjusted by trial and error in practice. As stated above, we suggest that the number of hidden neurons can be specified by trial and error starting from the adaptive network size, which can avoid certain blindness at the beginning of the trial. This general rule deserves to be tried and further verified in practice.


\section{Future Work}
\label{part6}

Although we only show the composite optimization algorithms for the three-layer sigmoid networks, our algorithms are also applicable to the more complex sigmoid networks, such as the sigmoid networks with multiple hidden layers, with multiple outputs, and with output layer neurons that are processed with sigmoid functions. In the design of model (\ref{composite}), the convexity of the outer function $\mathbb{L}$ is due to the convex loss function $L$, and the smoothness of the inner function $F$ is due to the smooth fitting function $f$. So the algorithms can be used to solve the sigmoid networks whenever we maintain the convexity of $L$ and the smoothness of $f$ (note that $f$ is always smooth in sigmoid networks). It is not difficult to solve the general sigmoid networks with convex loss functions using our algorithms by setting the same form of $\mathbb{L}$ and $F$ as the case of one hidden layer. As a matter of fact, the composite structure (\ref{composite}) can provide a unified framework for the development and analysis of sigmoid networks, especially for the non-convex and non-smooth optimization problems. Moreover, the various composite structures in neural networks pose more challenges for the study of composite optimization algorithms. The breakthrough of composite optimization algorithms will also drive the development of neural network learning algorithms. Last but not least, the convergence results of convex composite optimization (\ref{cco}) in the literature all seem to be established on $G(\boldsymbol{\omega}^{*})\in C_{h}$. While the more general convergence theorems should be established possibly on $G(\boldsymbol{\omega}^{*})\notin C_{h}$, which is still an open problem in the area of composite optimization. In view of this, we will explore this issue further.

\subsection*{Acknowledgments}

The research was supported in part by the National Natural Science Foundation of China under grants 12071157 and 12026602, and the Natural Science Foundation of Guangdong 2020B1515310013. Qi Ye is the corresponding author.

\vskip 0.2in
\bibliography{ref}

\end{document}